\DeclareMathAlphabet{\mathpzc}{OT1}{pzc}{L}{it} 
\newtheorem{definition}{Definition}[section]
\newtheorem{proposition}[definition]{Proposition}
\newtheorem{theorem}[definition]{Theorem}
\newtheorem{corollary}[definition]{Corollary}
\newtheorem{remark}[definition]{Remark}
\newtheorem{lemma}[definition]{Lemma}
\def\R{\mathbb{R}}
\def\T{\mathbb{T}}
\def\Z{\mathbb{Z}}
\def\N{\mathbb{N}}
\def\rd{\mathrm{d}}
\def\cB{\mathcal{B}}
\def\fg{\mathfrak g}
\def\cD{\mathcal{D}}
\newcommand{\bea}{\begin{eqnarray}}
  \newcommand{\eea}{\end{eqnarray}}
  \newcommand{\beab}{\begin{eqnarray*}}
  \newcommand{\eeab}{\end{eqnarray*}}
  \newcommand{\be}{\begin{equation}}
  \newcommand{\ee}{\end{equation}}
\title{Bernoulli property for certain skew products over hyperbolic systems}
\author{Changguang Dong,\ Adam Kanigowski}
\begin{document}
\baselineskip=14pt \maketitle
\begin{abstract}
We study the Bernoulli property for a class of partially hyperbolic systems arising from skew products. More precisely, we consider a hyperbolic map $(T,M,\mu)$, where $\mu$ is a Gibbs measure, an aperiodic H\"older continuous cocycle $\phi:M\to \R$ with zero mean and a zero-entropy flow $(K_t,N,\nu)$. We then study the skew product 
$$
T_\phi(x,y)=(Tx,K_{\phi(x)}y),
$$
acting on $(M\times N,\mu \times \nu)$. We show that if $(K_t)$ is of slow growth and has good equidistribution properties, then $T_\phi$ remains Bernoulli. In particular, our main result applies to $(K_t)$ being a typical translation flow on a surface of genus $\geq 1$ or a  smooth reparametrization of isometric flows on $\T^2$. This provides examples of non-algebraic, partially hyperbolic systems which are Bernoulli and for which the center is non-isometric (in fact might be weakly mixing).
\end{abstract}
\tableofcontents

\section{Introduction}
Chaotic properties of smooth dynamical systems have been an active area of research for the last sixty years. One of the most chaotic property of a smooth system is being {\em isomorphic} to an independent process, i.e.\ being a Bernoulli system. By now, there are many examples of smooth Bernoulli systems, the main source of such coming from algebraic setting. Bernoulli property is known to hold for ergodic toral automorphisms \cite{Katznelson}, ergodic autmorphisms of nilmanifolds \cite{GorodnikSpatzier,Rud}, positive entropy translations on irreducible quotients of semi-simple Lie groups \cite{Dani,Kanigowski,OrWe}. There are also Bernoulli systems outside algebraic world: Anosov maps \cite{bow,si}, Anosov flows \cite{Ratner111}, hyperbolic billiards (with singularities) \cite{Chernov}, suspensions of Anosov maps \cite{Bunimovich,Ratner101}, certain compact group extension of Anosov diffeomorphisms \cite{ru-1}. However, apart from algebraic systems, all examples of smooth Bernoulli systems have a very strong restriction on possible behavior on the center space: the center is either trivial or {\em isometric}. The main reason is that in the non-algebraic setting, the main tool of establishing Bernoulli property is the geometric mechanism introduced in \cite{OrWe} which, by its nature, puts the aformentioned strong restrictions on the center space. 

We are interested in the Bernoulli property for (non-algebraic) partially hyperbolic systems which have a non-trivial growth on the center. Historically (see eg.\ \cite{Kalikow,kat,OrnsteinRudolphWeiss,Rud2}) a successful class of smooth systems for which various chaotic properties were studied are {\em skew products}. In the abstract setting, skew products are defined as follows: fix $T:(X,\mu)\to (X,\mu)$ (the base), $S:(Y,\nu)\to (Y,\nu)$ (the fiber) and a {\em cocycle} $\phi:X\to \Z$. Then the skew product acting on $(X\times Y,\mu\times \nu)$ is given by 
\be\label{eq:skew}
T_\phi(x,y)=(Tx,S^{\phi(x)}y).
\ee
Skew products transformations are a rich source of $K$ non-Bernoulli systems \cite{Austin,Kalikow,krv,kat}. On the other hand, in the measurable category, for $T$ being the full two-shift, some rank one $S:(Y,\nu)\to (Y,\nu)$ and some special cocycles $\phi$ (crucially depending only on the zero-th coordinate), the skew product remains Bernoulli \cite{bs-1,bs-2}. To study skew products in smooth category, it is enough to take $S=(S_t)$ to be a smooth flow and a smooth cocycle $\phi:X\to \R$ in \eqref{eq:skew}.  In this setting however, the measurable approach from \cite{bs-1,bs-2} breaks down as they crucially used the fact that the cocycle depends on one (or finitely many) coordinates which is not compatible with smoothness of $\phi$. This is one of the reasons why, as mentioned above, there are no examples of non-algebraic Bernoulli systems for which the center has non-trivial growth.

In this paper, we consider skew products $T_\phi$ of the following form:
\begin{itemize}
\item the base $T:(M,\mu)\to(M,\mu)$ is a hyperbolic diffeomorphism with $\mu$ being a Gibbs measure;
\item the cocycle $\phi:M\to \R$ is aperiodic (see Definition \ref{ap}) and of zero mean;
\item the fiber $(K_t):(N,\nu)\to(N,\nu)$ is {\em quasi-elliptic} (see Definition \ref{growth-1}) and having a regular generating partition (see Definition \ref{Gen})
\end{itemize}

Our main result (see Theorem \ref{main-1} and Corollary \ref{main-3}) is that under the above assumptions the skew product $T_\phi$ is a Bernoulli system. In Section \ref{sec:qe} we show that the above assumptions on the fiber are satisfied for $K_t$ being a typical (in the measure theoretic sense) {\em translation flow} on every surface of genus $\geq 1$ and also for smooth reparametrizations of linear flows on $\T^2$. Recall that by \cite{Avila-Forni} it follows that a typical translation flow is {\em weakly mixing}. Moreover, by \cite{Fayad,Sklover} it follows that there exist weakly mixing smooth reparametrizations of linear flows on $\T^2$. As a consequence, we provide first examples of non-algebraic smooth Bernoulli systems with non-trivial growth on the center (in fact weakly mixing). This should be contrasted with a recent result in  \cite{krv}, where the authors showed non-Bernoulliness of analogous skew products on $\T^2\times \T^2$, where $K_t$ was a toral flow with a highly degenerated fixed point (Kochergin flow). The difference is that unlike for Kochergin flows our examples exhibit a slow divergence of nearby points at all scales (slow growth).
We now pass to a more precise description of our results.  

\subsection{Main results}
\begin{definition}\label{growth-1}
We say, a flow $(K_t,N,\nu,d)$ is quasi-elliptic if there exist real positive sequences $\{a_i\}$, $\{b_i\}$, $\{\delta_i\}$ with 
$$a_i\to+\infty,\;\frac{a_i}{b_i}\to0,\;\delta_i\to 0,\;\text{ as }i\to\infty$$
and a sequence $\{N_i\subset N\}$ with $\nu(N_i)\to 1$, such that the following two conditions hold:
\begin{itemize}
\item[(\textbf{A})] for any $i\in \N$ and any pair $(y_1,y_2)\in N_i\times N_i$, there exists $t_{y_1,y_2}\in [0,a_i]$ such that $d(y_1,K_{-t_{y_1,y_2}}y_2)<\delta_i$;
\item[(\textbf{B})] $K_t$ is {\em almost continuous along the orbits}:
there exists a sequence of sets $(Z_j)$, $\nu(Z_j)\to 1$ satisfying: for every $j$ and every  $\eta>0$ there exists a $\xi_j>0$ such that for every $|t|<\xi_j$ and every $y\in Z_j$,
\begin{equation}\label{eq:css}
d(K_ty,y)<\eta;
\end{equation}
\item[(\textbf{C})] for every $\epsilon>0$ there exists $i_\epsilon,j_\epsilon$ such that $\nu(Z_{j_\epsilon})>1-\epsilon$, and for every $i\geq i_\epsilon$ 
for every $y_1,y_2$ as in $(\textbf{A})$,
$$
d\Big(K_ty_1,K_{t-t_{y_1,y_2}}y_2\Big)<\epsilon,
$$
for every $t\in [0,b_i]$ for which $K_ty_1\in Z_{j_\epsilon}$.
\end{itemize}
\end{definition}

To better understand the definition we remark that  condition $({\bf A})$ is related to good equidistribution on the sets $\{N_i\}$, and  $({\bf C})$ describes slow orbit divergence along the subsequence $\{b_i\}$.

\begin{definition}\label{Gen}
We say a flow $(K_t,N,\nu,d)$ has a {\em regular generating partition} if there exists a (finite) partition $\mathcal Q$ of $N$ and $t_0\in \R$ such that 
\begin{itemize}
\item[(1)] the automorphism $K_{t_0}$ is ergodic,
\item[(2)] $\mathcal Q$ is a generating partition for $K_{t_0}$,
\item[(3)] $\lim_{\eta\to 0}\nu(V_{\eta}\partial \mathcal Q)=0$.
\end{itemize}Here $\partial \mathcal Q$ is the union of the boundaries of atoms of $\mathcal Q$, and $V_{\eta}(\partial \mathcal Q)$ is the $\eta$ neighborhood of $\partial \mathcal Q$ (namely, all the points whose distance to $\partial\mathcal Q$ is less than $\eta$).
\end{definition}



Our main result is:
\begin{theorem}\label{main-1}
Let $(\Sigma_{A},\sigma,\mu)$ be a transitive subshift of finite type with a Gibbs measure $\mu$ and let $\phi:\Sigma_{A}\to\R$ be an aperiodic (see Definition \ref{ap}), H\"older continuous function such that  $\int_{\Sigma_A}\phi \rd\mu=0$.  Assume that $(K_t,N,\nu,d)$ is a quasi-elliptic ergodic flow with a regular generating partition. Then the skew product $(\sigma_\phi,\Sigma_A\times N,\mu\times\nu)$ is Bernoulli.
\end{theorem}

Let $T\in \text{Diff}^\infty_\mu(M)$ be a hyperbolic diffeomorphism (admitting a Markov partition) with a Gibbs measure $\mu$. It is well known (\cite{bow,parry}) that $(T,M,\mu)$ is isomorphic to a subshift of finite type with a Gibbs measure, and there is a one to one map from the space of H\"older functions on $M$ to that on the shift space. With this observation, we have

\begin{corollary}\label{main-3}
Let $T\in \text{Diff}^\infty_\mu(M)$ be a hyperbolic diffeomorphism (admitting a Markov partition) with a Gibbs measure $\mu$. Let $(K_t,N,\nu,d)$ be an ergodic, quasi-elliptic flow with zero entropy, which admits a regular generating partition. Let $\phi:M\to\R$ be an aperiodic smooth function with $\int_M\phi \rd\mu=0$. Then the skew product $(T_\phi,M\times N,\mu\times\nu)$ is Bernoulli.
\end{corollary}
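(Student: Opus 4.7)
The plan is to reduce Corollary \ref{main-3} to Theorem \ref{main-1} via the symbolic coding of $T$. The authors flag this reduction in the paragraph preceding the corollary's statement, so the work consists of two routine pieces: producing the coding and verifying that the three hypotheses on $\phi$ transfer.

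For the coding, I would invoke the classical Bowen--Sinai construction: given a Markov partition for $(T,M,\mu)$, there exist a transitive subshift of finite type $(\Sigma_A,\sigma)$, a H\"older semiconjugacy $\pi:\Sigma_A\to M$ with $T\circ \pi=\pi\circ \sigma$, and a Gibbs measure $\tilde\mu$ on $\Sigma_A$ with $\pi_*\tilde\mu=\mu$ such that $\pi$ is a bijection off a $\tilde\mu$-null set (see \cite{bow,parry}). Set $\tilde\phi:=\phi\circ \pi$ and $\Phi:=\pi\times\id_N$. Then $\Phi$ is a measure-theoretic isomorphism between $(\Sigma_A\times N,\tilde\mu\times\nu)$ and $(M\times N,\mu\times\nu)$ that intertwines $\sigma_{\tilde\phi}$ and $T_\phi$:
$$T_\phi\circ \Phi(x,y)=(T\pi x,K_{\phi(\pi x)}y)=(\pi\sigma x,K_{\tilde\phi(x)}y)=\Phi\circ \sigma_{\tilde\phi}(x,y).$$

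Next I would verify the hypotheses of Theorem \ref{main-1} for $\tilde\phi$. H\"older regularity is immediate: $\phi$ is smooth on $M$ and $\pi$ is H\"older, so $\tilde\phi=\phi\circ\pi$ is H\"older continuous on $\Sigma_A$. The zero-mean condition follows from the change of variables $\int \tilde\phi\, d\tilde\mu=\int \phi\, d\mu=0$. For aperiodicity, one observes that the condition in Definition \ref{ap} depends only on the measure-theoretic isomorphism class of the cylinder extension associated to the cocycle; since the map $\pi\times\id_\R$ is such an isomorphism between the $\R$-extensions of $\sigma$ by $\tilde\phi$ and of $T$ by $\phi$, aperiodicity of $\phi$ over $(T,M,\mu)$ is equivalent to aperiodicity of $\tilde\phi$ over $(\sigma,\Sigma_A,\tilde\mu)$.

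With all three hypotheses checked, Theorem \ref{main-1} yields that $\sigma_{\tilde\phi}$ is Bernoulli on $(\Sigma_A\times N,\tilde\mu\times\nu)$, and pushing forward by the isomorphism $\Phi$ transfers this to $T_\phi$ on $(M\times N,\mu\times\nu)$. No step is substantive once Theorem \ref{main-1} is in hand; the only point that deserves explicit mention is the transfer of aperiodicity through the coding, which is immediate from its isomorphism invariance.
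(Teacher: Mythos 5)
Your proposal is correct and follows the same route the paper intends: the Bowen--Sinai symbolic coding yields a measure-theoretic isomorphism $\pi\times\mathrm{id}_N$ intertwining $T_\phi$ with $\sigma_{\tilde\phi}$ for $\tilde\phi=\phi\circ\pi$, and Theorem \ref{main-1} then applies. Your explicit check that aperiodicity transfers (being an a.e.\ statement about measurable solutions, hence invariant under measure-theoretic isomorphism) is a detail the paper leaves implicit but which is correctly handled.
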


In Section \ref{sec:qe} we show that the class of quasi-elliptic flows with regular generating partition includes typical translation flows and smooth reparametrizations of two-dimensional linear toral flows. As a consequence we get:

\begin{corollary} 
Let $(T,\phi)$ be as in Corollary \ref{main-3}. Then for every surface $S$ of genus $\geq 1$ and a.e.\ translation flow $K_t$ on $S$ the corresponding skew product $T_\phi$ is Bernoulli. The same holds if $K_t$ is any $C^1$ smooth reparametrization of linear flow on $\T^2$.
\end{corollary}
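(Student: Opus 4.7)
The plan is to reduce the statement to Corollary \ref{main-3}. Since $(T,\phi)$ already satisfies the hypotheses on the base and cocycle, what must be verified about the fiber flow $K_t$ is fourfold: ergodicity, zero entropy, quasi-ellipticity (Definition \ref{growth-1}), and existence of a regular generating partition (Definition \ref{Gen}). Zero entropy is immediate: translation flows have polynomial complexity, and smooth reparametrizations of linear toral flows preserve zero entropy by Abramov's formula. Ergodicity follows from classical theory, namely Masur--Veech unique ergodicity for a.e.\ translation flow on a stratum, and the fact that a smooth positive reparametrization of an ergodic linear flow stays ergodic.

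For a $C^{1}$ reparametrization $K_{t}$ of a linear flow $L_{t}$ on $\T^{2}$, write $K_{t}y=L_{\tau(t,y)}y$ with $\tau$ a smooth cocycle over $L_{t}$. Condition $(\textbf{A})$ follows from quantitative equidistribution of $L_{t}$: one takes $N_{i}=\T^{2}$, $\delta_{i}\to 0$, and $a_{i}$ polynomial in $\delta_{i}^{-1}$, and transports the estimate through the bi-Lipschitz time-change. Condition $(\textbf{B})$ follows from uniform continuity of $K_{t}$, so one may take $Z_{j}=\T^{2}$. Condition $(\textbf{C})$ is easy: since $L_{t}$ is an isometry of $\T^{2}$, distances between nearby $L$-orbits are preserved exactly; pulling back through $\tau$ shows that distances between nearby $K$-orbits remain bounded by a fixed multiple of $\delta_{i}$ on arbitrarily long intervals, so any $b_{i}$ with $a_{i}/b_{i}\to 0$ works. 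The regular generating partition is a grid of small rectangles in $\T^{2}$: its boundary is a finite union of smooth arcs, so property (3) of Definition \ref{Gen} holds, and a finite refinement generates under the ergodic map $K_{t_{0}}$ by Krieger's theorem applied in combination with weak mixing (or aperiodicity) of $K_{t_{0}}$.

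For translation flows on a surface $S$ of genus $\geq 1$, we invoke the Teichm\"uller-theoretic toolbox. Take $N_{i}$ to be the complement of a $\rho_{i}$-neighbourhood of the finitely many singularities, with $\rho_{i}\to 0$ chosen so $\nu(N_{i})\to 1$. Unique ergodicity together with Forni's theorem on the deviations of ergodic averages yields a uniform equidistribution statement giving $(\textbf{A})$ with $a_{i}$ sub-polynomial in $\delta_{i}^{-1}$. For $(\textbf{B})$, on the complement of a small singular neighbourhood the flow is a local translation in flat charts, and uniform continuity is immediate. For $(\textbf{C})$, if $y_{1},y_{2}$ lie in $N_{i}$ with $d(y_{1},K_{-t_{y_{1},y_{2}}}y_{2})<\delta_{i}$, then in local flat charts the two orbits are parallel translates, so their distance is preserved exactly until one of them enters a $\delta_{i}$-neighbourhood of the singular set; a Borel--Cantelli argument controlled by Forni's estimates bounds the waiting time for this encounter from below by $\delta_{i}^{-\alpha}$ on a set of large measure, which is the required $b_{i}$. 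The regular generating partition is provided by a zippered-rectangles / Veech Markov decomposition associated with a sufficiently deep Rauzy--Veech induction step: the boundary is a finite union of saddle connections and transversal segments, each of Hausdorff dimension $1$, giving property (3), while generation under the time-$t_{0}$ return map follows from exponential shrinkage of Markov cylinders along Rauzy--Veech iterates.

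The principal technical obstacle is condition $(\textbf{C})$ for translation flows in genus $\geq 2$. Because the flow is neither hyperbolic nor isometric, the slow-divergence estimate is genuinely non-trivial and depends on controlling how often a generic long orbit segment approaches the singular set; this is precisely where the Forni / Kontsevich--Zorich machinery enters and where the sets $N_{i}$ must be chosen to simultaneously carry the equidistribution of $(\textbf{A})$ and the non-recurrence-to-singularities needed for $(\textbf{C})$. The remaining verifications, including the reparametrization case on $\T^{2}$, are considerably more elementary.
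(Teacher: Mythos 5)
Your proposal takes a genuinely different route from the paper, and unfortunately it contains a real gap at its central step, namely condition $(\textbf{C})$.

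The paper does not attempt to verify quasi-ellipticity by direct flat-geometry or Forni-type estimates. Instead it first shows (Section~\ref{sec:qe}) a general lemma: a special flow $T^\psi$ over an ergodic automorphism is quasi-elliptic as soon as it admits a sequence of towers $\mathcal T_n=\bigcup_{t<h_n}T^\psi_t B_n$ with diameters of the levels tending to $0$ and $\nu(B_n\cap T^\psi_{h_n}B_n)/\nu(B_n)\to 1$, i.e. as soon as it is \emph{rigid} along a tower sequence. The crucial point is that the sets $N_i$ are built from $\tilde B_n:=\bigcap_{k=0}^{M_n}T^\psi_{-kh_n}B_n$, so any two points of $N_i$, after the shift by $t_{y_1,y_2}\leq 2h_i$, lie in the \emph{same level} of the tower and therefore remain $\delta_i$-close for the whole time $b_i=\tfrac12 M_i h_i$ without any drift. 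Rigidity of a.e.\ translation flow (a result of Katok, \cite{Kat}) and of special flows over irrational rotations with $C^1$ roofs then yields quasi-ellipticity. The regular generating partition is produced, again at the level of the suspension, from a regular generating partition of the base IET or rotation and a piecewise Lipschitz roof.

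Your verification of $(\textbf{C})$ does not survive scrutiny, even in the easiest case of a $C^1$ reparametrization of a linear flow on $\T^2$. If $K_ty=L_{\tau(t,y)}y$, then for two points $y_1$ and $y_2'=K_{-t_{y_1,y_2}}y_2$ that are only $\delta_i$-close, the time changes satisfy
\[
\tau(t,y_1)-\tau(t,y_2')=\int_0^t\bigl(f(K_sy_1)-f(K_sy_2')\bigr)\,\mathrm d s,
\]
which grows linearly in $t$ with coefficient of order $\delta_i\,\|\nabla f\|$. Thus $d(K_ty_1,K_ty_2')$ is of order $\delta_i t$ along the flow direction, not ``bounded by a fixed multiple of $\delta_i$ on arbitrarily long intervals.'' Since $(\textbf{A})$ forces $a_i\gtrsim 1/\delta_i$ and $(\textbf{C})$ requires $a_i/b_i\to 0$, any admissible $b_i$ satisfies $\delta_i b_i\to\infty$, so the naive estimate gives nothing. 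The reason the statement is nevertheless true is exactly what your argument omits: the pair $(y_1,y_2')$ cannot be arbitrary, it has to be constrained to lie in a single level of a rigidity tower, and that constraint prevents the cocycle drift from accumulating. The same criticism applies a fortiori to genus $\geq 2$, where you yourself flag that the Borel--Cantelli / Forni step is the crux; as written it is not a proof, and it is not the mechanism the paper uses.

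A secondary issue: citing Krieger's generator theorem gives existence of some finite generating partition for $K_{t_0}$, but not one with the boundary-regularity required by Definition~\ref{Gen}. The paper instead constructs a regular generating partition explicitly from small rectangles in the suspension space over a regular generating partition of the IET/rotation, which makes the boundary a finite union of Lipschitz graphs and intervals.

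So the correct fix is to replace your direct-estimate strategy by the paper's two observations: (i) rigidity of the special-flow representation implies quasi-ellipticity, and (ii) a.e.\ translation flow and every $C^1$ time change of a linear flow on $\T^2$ is rigid (Katok) and admits a Lipschitz suspension presentation, hence also a regular generating partition.
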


\textbf{Acknowledgements:} The authors are indebted to Dmitry Dolgopyat for many insightful discussions.

\section{Preliminaries}

\subsection{Subshifts of finite type}
Let $m\in \N$ and  $\mathcal{A}:=\{0,\ldots,m-1\}$. For $A=(A_{ij})\in \mathcal{M}_{m\times m}$ with $A_{ij}\in\{0,1\}$, we define
$$
\Sigma_A:=\Big\{x=(x_j)_{j\in \Z}\in \mathcal{A}^\Z\;:\; A_{x_jx_{j+1}}=1\Big\}.
$$
Let $\sigma:\Sigma_A\to \Sigma_A$, $\sigma((x_j)_{j\in \Z})=(x_{j+1})_{j\in \Z}$. For $ t,s\in \Z$, we define the {\em cylinder sets} by setting 
$$
\mathscr C[a_t,\ldots,a_{t+s}]:=\{x\in \Sigma_A\;:\; x_i=a_i\text{ for }i\in [t,t+s]\cap \Z\}.
$$
We assume that $A$ is {\em irreducible}, which implies that $\sigma:\Sigma_A\to \Sigma_A$ is {\em transitive}.

Define a metric $\cD_2$ on $\Sigma_A$ by $\cD_2(\omega_1,\omega_2):=2^{-k}$  for any $\omega_1,\omega_2\in\Sigma_A$, where $k$ is the largest positive integer such that $(\omega_1)_i=(\omega_2)_i$ for any $|i|< k$.

In this paper, we are interested in the measure preserving system $$\sigma:(\Sigma_A,\mu,\cD_2)\to (\Sigma_A,\mu,\cD_2),$$ where $\mu$ is a {\em Gibbs measure} with a H\"older potential. We refer the reader to \cite{bow1} for the more detailed definition of Gibbs measure. Here we will emphasize the main property that we will use later.

If $\mu$ is a  Gibbs measure, then it has a local product structure, namely $\mu$ is equivalent to $\mu_\omega^+\times\mu_\omega^-$ locally. More precisely, there exist a $\delta>0$ and a H\"older function $h_\omega(\cdot)$ such that if $\mathcal D_2(x,\omega)<\delta$, then $$\frac{d(\mu^+_\omega\times\mu^-_\omega)}{d\mu}(x)=h_\omega(x).$$
Let $H_{\omega_1,\omega_2}$ be the canonical holonomy map (along stable manifolds) from $\Sigma_A^+(\omega_1)$ to $\Sigma_A^+(\omega_2)$. Since $\mu$ is Gibbs, it follows that for any $\epsilon>0$, there exists a $\delta>0$ such that if $\omega_1,\omega_2$ satisfies $\mathcal D_2(\omega_1,\omega_2)<\delta$, then for any cylinder $\mathscr C\subset \Sigma_A^+(\omega_1)$,  \be\label{e-ep}\left|\frac{\mu_{\omega_1}^+(\mathscr C)}{\mu_{\omega_2}^+(H_{\omega_1,\omega_2}\mathscr C)}-1\right|\le \epsilon.\ee

\subsection{Rokhlin's disintegration and conditional measures}

Let $\Sigma^+_A:=\{x=(x_j)_{j\in \N\cup\{0\}}\in \mathcal{A}^\mathbb{N}\;:\; A_{x_jx_{j+1}}=1\}$ and $\Sigma^-_A:=\{x=(x_{-j})_{j\in \N}\in \mathcal{A}^\mathbb{N}\;:\; A_{x_jx_{j+1}}=1\}$. For $x\in \Sigma_A$ we define the {\em stable set (past)}
$$
\Sigma_A^-(x):=\{y\in \Sigma_A\;:\; y_i=x_i\text{ for } i\geq 0\}
$$
and the {\em unstable set (future)}
$$
\Sigma_A^+(x):=\{y\in \Sigma_A\;:\; y_i=x_i\text{ for } i<0\}.
$$
Note that every $x\in \Sigma_A$ can be uniquely written in the form $x=(x^-,x^+)$, where $x^-\in \Sigma^-_A$ and $x^+\in\Sigma^+_A$. 

For $i\in\{+,-\}$, let $\pi^i$ denote the canonical projection from $\Sigma_A$ to $\Sigma^i_A$ ($\pi^i(x):=x^i$). Let $\mu^i$ be the pushforward of $\mu$ by $\pi^i$ on $\Sigma^i_A$, i.e. $\mu^i=(\pi^i)_*\mu$. We remark that in general $\mu\neq\mu^-\times\mu^+$. This is one of the differences between the full shift and subshifts of finite type.

We can also denote $\Sigma_A^-(x)$ for $x\in\Sigma_A^+$ since it fixes the future coordinates, and analogously $\Sigma_A^+(x)$ for $x\in\Sigma_A^-$. In particular, $\{\Sigma_A^-(x):x\in\Sigma_A^+\}$ forms a measurable partition of $\Sigma_A$. Therefore, by Rokhlin's Theorem on disintegration of measures, there exists a measurable map $\Phi$ that maps $x\in\Sigma_A^+$ to a probability measure on $\Sigma_A^-(x)$, and satisfies for any measurable set $U\in\Sigma_A$, 
\be\label{disintegration}\mu(U)=\int_{x\in\Sigma_A^+}\Phi(x)(U\cap\Sigma_A^-(x))\rd\mu^+(x).\ee

We call $\Phi(x)$ the conditional measure of $\mu$ restricted to the stable set $\Sigma_A^-(x)$. For simplicity of notation, we denote it $\mu^-_x$ for $x\in\Sigma^+_A$. Analogously, we have another family of conditional measures $\mu^+_x$ for $x\in\Sigma^-_A$.

\subsection{Cocycles over subshifts of finite type}

We first recall the following classical result (see eg.  \cite{si} or Proposition 1.2 in \cite{PP}).
\begin{lemma}\label{refine}
For any H\"older continuous function $\phi:\Sigma_{A}\to\R$, there exist two functions $\varphi$ and $h$, such that 
\begin{itemize}
\item[(1)] $\varphi$ is H\"older continuous, and only depends on the past,
\item[(2)] $h$ is H\"older continuous,
\item[(3)] $\phi=\varphi+h\circ \sigma-h$.
\end{itemize}
\end{lemma}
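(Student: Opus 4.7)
The statement is the classical Sinai--Bowen--Livshits cohomology lemma for H\"older functions on subshifts of finite type, so my plan is to carry out its standard construction while paying attention to the paper's convention that the representative $\varphi$ should depend on the \emph{past} (strict, i.e.\ coordinates $i<0$).

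First I fix, for each symbol $a\in\mathcal{A}$, an admissible one-sided continuation $\hat a=(\hat a_0,\hat a_1,\ldots)\in\Sigma^+_A$ with $A_{a,\hat a_0}=1$, and define a retraction $r:\Sigma_A\to\Sigma_A$ by $r(x)_i:=x_i$ for $i\le -1$ and $r(x)_i:=(\hat{x_{-1}})_i$ for $i\ge 0$. Then $r(x)\in\Sigma_A$ and $r(x)$ depends only on the past of $x$. With this in hand I define
\begin{equation*}
h(x):=\sum_{k=1}^\infty \bigl[\phi(\sigma^{-k}x)-\phi(\sigma^{-k}r(x))\bigr].
\end{equation*}
The key convergence bound is that $r(x)$ and $x$ coincide on every coordinate of index $\le -1$, hence $\sigma^{-k}x$ and $\sigma^{-k}r(x)$ coincide on every coordinate of index $<k$; consequently $\cD_2(\sigma^{-k}x,\sigma^{-k}r(x))\le 2^{-k}$, and H\"older continuity of $\phi$ gives $|\phi(\sigma^{-k}x)-\phi(\sigma^{-k}r(x))|\le C\theta^k$ for some $\theta\in(0,1)$. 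Therefore the series converges absolutely and $h$ is H\"older.

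Next, a telescoping manipulation at the level of partial sums, using the elementary identity $\sum_{k=1}^N[\phi(\sigma^{-k+1}x)-\phi(\sigma^{-k}x)]=\phi(x)-\phi(\sigma^{-N}x)$, gives in the limit
\begin{equation*}
h(\sigma x)-h(x)=\phi(x)-\varphi_0(x),
\end{equation*}
where $\varphi_0:=\phi-h\circ\sigma+h$ is automatically H\"older. To verify that $\varphi_0$ depends only on the coordinates $\{x_i:i\le 0\}$ I would compare $\varphi_0(x)$ and $\varphi_0(y)$ for two points agreeing on $\{i\le 0\}$: then $r(x)=r(y)$ and $r(\sigma x)=r(\sigma y)$, so the canonical-future contributions in $h(x)-h(y)$ and $h(\sigma x)-h(\sigma y)$ cancel and a short computation yields $h(\sigma x)-h(\sigma y)=\phi(x)-\phi(y)+h(x)-h(y)$, which forces $\varphi_0(x)=\varphi_0(y)$. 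Finally, to upgrade from dependence on $\{i\le 0\}$ to dependence on the strict past $\{i<0\}$, I apply the cosmetic coboundary adjustment $\varphi:=\varphi_0\circ\sigma^{-1}$, $h^\star:=h+\varphi_0\circ\sigma^{-1}$, and verify directly that $\phi=\varphi+h^\star\circ\sigma-h^\star$ with $\varphi$ H\"older and depending only on $\{x_i:i\le -1\}$.

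The main obstacle is the non-commutativity of the retraction $r$ with the shift $\sigma$: the naive formal expression $\sum_k[\phi(\sigma^{-k}r\sigma x)-\phi(\sigma^{-k}rx)]$ for $\varphi_0$ is only conditionally convergent, so the H\"older regularity and the past-invariance of $\varphi_0$ have to be read off the clean identity $\varphi_0=\phi-h\circ\sigma+h$ rather than from a termwise bound on the series. A secondary technical nuisance is the gap between "measurable with respect to $\{i\le 0\}$" and "measurable with respect to $\{i<0\}$" introduced by the shift, which is handled by the $\sigma^{-1}$ adjustment described above.
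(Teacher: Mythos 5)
Your argument is correct. The paper does not actually supply a proof of this lemma; it cites the classical result (Sinai, \cite{si}; Parry--Pollicott \cite{PP}, Prop.~1.2), whose usual formulation produces a cocycle depending on the \emph{future}. What you have written is precisely the standard Bowen--Sinai construction, reflected in time: the retraction $r$ fixes the past rather than the future, the transfer function $h(x)=\sum_{k\ge1}\bigl[\phi(\sigma^{-k}x)-\phi(\sigma^{-k}r(x))\bigr]$ uses backward iterates instead of forward ones (with the same geometric estimate $\cD_2(\sigma^{-k}x,\sigma^{-k}r(x))\le 2^{-k}$ driving absolute convergence and H\"older regularity of $h$), and the past-dependence of $\varphi_0=\phi-h\circ\sigma+h$ is verified exactly as in the classical proof --- by taking $x,y$ agreeing on $\{i\le 0\}$, noting $r(x)=r(y)$ and $r(\sigma x)=r(\sigma y)$ so that $h(x)-h(y)=\sum_{k\ge1}[\phi(\sigma^{-k}x)-\phi(\sigma^{-k}y)]$ converges absolutely and satisfies $h(\sigma x)-h(\sigma y)=\phi(x)-\phi(y)+h(x)-h(y)$, which gives $\varphi_0(x)=\varphi_0(y)$.

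Two small remarks, neither of them a gap. First, the intermediate sentence ``a telescoping manipulation\ldots gives in the limit $h(\sigma x)-h(x)=\phi(x)-\varphi_0(x)$'' reads as if this identity is derived, whereas in fact it is the \emph{definition} of $\varphi_0$; you do acknowledge this at the end, so the logic closes, but the presentation could be tightened by simply defining $\varphi_0:=\phi-h\circ\sigma+h$ and then verifying its past-dependence (which you in any case do). Second, your cosmetic $\sigma^{-1}$ shift to move from dependence on $\{i\le 0\}$ to dependence on $\{i\le -1\}$ is fine and well-bookkept (the required identity $\varphi+h^\star\circ\sigma-h^\star=\varphi_0+h\circ\sigma-h=\phi$ checks out, and $\sigma^{-1}$ is bi-Lipschitz for $\cD_2$, so H\"olderness is preserved); whether one needs this shift at all depends on a convention ambiguity in the paper itself (the Preliminaries put index $0$ in the future while Section~5 puts it in the past), so erring on the side of the stricter reading is reasonable. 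Overall this is a complete and correct proof of the cited classical fact.
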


We remark that analogous statement holds if we replace past by future in the above lemma. We will use the above lemma to simplify the description of atoms of $\bigvee^\infty_{i=0}T^i\mathcal R$ for any generating partition $\mathcal R$ in Lemma \ref{atom}. This is crucial in later construction of matching.

\subsection{Mixing local limit theorem}

Let $\sigma:(\Sigma_A,\mu)\to (\Sigma_A,\mu)$ be a transitive subshift of finite type (with $\mu$ being a Gibbs measure). Let $\psi\in C^\beta(\Sigma_A)$ be H{\"o}lder continuous with exponent $\beta$, and $\int_{\Sigma_A}\psi \rd\mu=0$. Assume moreover that $\psi$ is not a {\em coboundary}, i.e. there does not exist a measurable solution $h$, to 
$$
\psi(x)=h(\sigma x)-h(x).
$$

Let $S_n(\psi)(x)=\sum_{k=0}^{n-1}\psi(\sigma^k(x))$. Since $\psi$ is not a coboundary, the well known Central Limit Theorem (CLT)  asserts that 
$$\mu(\{x:\frac{S_n(x)}{\sqrt{n}}\in I\})\to \int_I \fg_\varrho(t)\rd t\quad\text{as}\; n\to\infty.$$
Here $\fg_\varrho(t)=e^{-\frac{1}{2}\varrho^2t^2}$ is the Gaussian density function with deviation $\varrho$. The relation of $\psi$ and $\varrho$ is given by the  Green-Kubo formula,$$\varrho^2=\int \psi^2\rd\mu+2\sum^\infty_{n=1}\int \psi (\psi\circ \sigma^n) \rd\mu.$$

We will need a refined version of CLT, together with the mixing property. For this purpose, we introduce the following definition.
\begin{definition}\label{ap}
A function $f:\Sigma_A\to \R$ is {\bf periodic} if there exist $\rho\in\R$, $g:\Sigma_A\to\R$ measurable, $\lambda>0$ and $q:X\to \Z$, such that $f=\rho+g-g\circ \sigma+\lambda q$ almost everywhere. Otherwise, it is {\bf aperiodic}.
\end{definition}
\begin{remark}We recall that by \cite{PP}, if a H\"older cocycle $f$ is periodic, then the transfer function might be chosen to be H\"older continuous.
\end{remark}

We adapt the definition in \cite{d-n} to our case as follows.
\begin{definition}[\cite{d-n}] 
Let $T:(X,\mu)\to (X,\mu)$ and let $\psi:X\to\R$ be square integrable.
We  say  that $(T,\psi)$ satisfies  the {\bf mixing local limit theorem} (MLLT) if there are some functions $g$ and $h$, where $h$ is bounded and $\mu$-almost everywhere continuous, such that $\psi=g-h+h\circ T$, a number $\varrho$ such  that,  as $n\to\infty$,  the  following  holds:  for  any  bounded  and continuous $\alpha,\beta:X\to\R$ and  for  any  continuous  and  compactly  supported $\gamma:\R\to\R$, for any sequence $\omega_n$ satisfying
$$|\omega_n-\omega\sqrt{n}|\le K,$$ we have \be\sqrt{n}\int_X \alpha(x)\beta(T^nx)\gamma[S_n(\psi)(x)-n\mu(\psi)-\omega_n]\rd\mu(x)\to \fg_\varrho(\omega)\mu(\alpha)\mu(\beta)\int\gamma \rd u .\ee
The convergence is uniform once $K$ is fixed and $\omega$ is chosen from a compact set. Here $u$ is a measure on $\R$.
\end{definition}

We also want to remark that MLLT holds for Anosov diffeomorphisms (\cite{GH}), certain suspension flow over hyperbolic systems (\cite{d-n}), certain systems admitting Young towers (\cite{go}). We will need MLLT for transitive subshifts of finite type (for the proof see eg. \cite{GH}).

\begin{theorem}\label{sft-m}
Let $\sigma_A$ be a transitive subshift of finite type with a Gibbs measure $\mu$. Then for every aperiodic, H\"older continuous function $\psi$, $(\sigma_A,\psi)$ satisfies MLLT with the measure $u$  equivalent to Lebesgue measure on $\R$.
\end{theorem}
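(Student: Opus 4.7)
My plan is to follow the Nagaev--Guivarc'h perturbation method via twisted transfer operators. First, by the ``future'' analogue of Lemma \ref{refine}, I may assume that $\psi$ is cohomologous to a H\"older function $\varphi$ depending only on the future coordinates, so the analysis passes to the one-sided shift $(\Sigma_A^+,\sigma,\mu^+)$. On this quotient the Ruelle transfer operator $L$ associated to the normalized Gibbs potential acts on $C^\beta(\Sigma_A^+)$ with $1$ as a simple leading eigenvalue, spectral gap, and the remainder of the spectrum confined to a disk of radius $r<1$; the stationary probability $\mu^+$ spans the left eigenspace.

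Next, I would introduce the twisted transfer operators $L_t f := L(e^{i t\varphi} f)$ for $t\in\R$. For $|t|$ small, Kato's analytic perturbation theory yields a decomposition
\[
L_t^n = \lambda(t)^n \Pi_t + N_t^n,
\]
with a simple dominant eigenvalue $\lambda(t)$ satisfying $\lambda(0)=1$, $\lambda'(0) = i\int\varphi\, d\mu = 0$, $\lambda''(0) = -\varrho^2$, and $\|N_t^n\|_{C^\beta}$ decaying geometrically uniformly in a neighborhood of $0$. The decisive step is then to prove that for every compact $K\subset \R\setminus\{0\}$ the spectral radius of $L_t$ on $C^\beta$ is strictly less than $1$, uniformly in $t\in K$: an eigenvalue of modulus $1$ would, through a Livshits-type argument on periodic orbits of $\sigma$, force $t\varphi$ to be cohomologous modulo $2\pi\Z$ to a constant, contradicting the aperiodicity hypothesis in Definition \ref{ap}. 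A standard compactness argument in $t$ then yields $\|L_t^n\|_{C^\beta}\le C_K\rho_K^n$ with $\rho_K<1$.

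With these spectral ingredients in hand, I would Fourier invert. Using $\gamma(u)=\frac{1}{2\pi}\int\hat\gamma(t)e^{itu}\,dt$,
\[
\sqrt n \int \alpha(x)\beta(\sigma^n x)\gamma\bigl[S_n\varphi(x)-\omega_n\bigr]d\mu = \frac{\sqrt n}{2\pi}\int_\R \hat\gamma(t)\,e^{-it\omega_n}\int \alpha\cdot(\beta\circ\sigma^n)\,e^{itS_n\varphi}\,d\mu\, dt,
\]
and the inner integral equals $\int L_t^n(\alpha)\cdot\beta\,d\mu^+$ by the cocycle identity for $L_t$. I would split the outer integral into $|t|\le\delta$ and $|t|>\delta$: in the first range, substitute the spectral decomposition and rescale $t=s/\sqrt n$, using $\lambda(s/\sqrt n)^n\to e^{-\varrho^2 s^2/2}$ uniformly on compacta in $s$, continuity of $\Pi_t$ at $t=0$ with $\Pi_0(\alpha)=\mu^+(\alpha)\cdot\mathbf{1}$, and $\omega_n/\sqrt n\to\omega$; in the second range, discard using the uniform geometric bound from the aperiodicity step. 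The two estimates combine to give $\fg_\varrho(\omega)\,\mu(\alpha)\,\mu(\beta)\int\gamma\,du$ with $du$ Lebesgue measure, and the uniformity claims in $\omega_n$ and in $\omega$ on compact sets follow from the analyticity of $t\mapsto(\lambda(t),\Pi_t)$.

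The main obstacle is precisely the aperiodicity step, i.e.\ ruling out unimodular spectral values of $L_t$ uniformly on compact subsets of $\R\setminus\{0\}$; this is the only place where Definition \ref{ap} enters, and it reduces via Livshits to a statement about measurable solutions of coboundary equations on $\Sigma_A$, which the paper's remark (after Definition \ref{ap}) guarantees can be taken H\"older. The equivalence of the limit measure $u$ with Lebesgue on $\R$ then comes for free: any atomic or singular component of $u$ would encode an arithmetic lattice structure on the essential range of $\varphi$, precisely what aperiodicity forbids.
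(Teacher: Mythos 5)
The paper does not prove Theorem \ref{sft-m} itself; it only cites Guivarc'h--Hardy \cite{GH}. Your sketch is precisely the Nagaev--Guivarc'h twisted-transfer-operator argument that underlies that reference: reduction to a one-sided cocycle via (the future version of) Lemma \ref{refine}, analytic perturbation of $L_t=L(e^{it\varphi}\,\cdot\,)$ near $t=0$ with $\lambda'(0)=0$ and $\lambda''(0)=-\varrho^2$, exclusion of unimodular peripheral spectrum for $t\neq 0$ using aperiodicity, and Fourier inversion split into $|t|\le\delta$ (rescaled $t=s/\sqrt n$) and $|t|>\delta$ (uniform geometric decay). This is the correct approach and is what the cited reference does; the aperiodicity step is the only place Definition \ref{ap} enters, exactly as you say.

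Two routine points you glossed over and should fill in when writing this up. First, the test function $\gamma$ in the MLLT is only assumed continuous with compact support, so $\hat\gamma$ need not be in $L^1(\R)$; one first proves the statement for $\gamma$ in a dense class with $\hat\gamma$ integrable (e.g.\ $\gamma\in C^\infty_c$ or $\gamma$ with $\hat\gamma$ compactly supported) and then sandwiches a general $\gamma$ between such functions. Second, $\alpha$ and $\beta$ in the MLLT live on the two-sided shift $\Sigma_A$, while the transfer operator $L_t$ acts on functions on the one-sided quotient; the standard fix is to first take $\alpha,\beta$ depending on finitely many coordinates, compose with a fixed power of $\sigma$ to make them one-sided (which changes $n$ by a bounded amount, harmless in the limit), and then approximate general bounded continuous $\alpha,\beta$ in $L^1(\mu)$. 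Neither point alters the structure of the argument. Also, a minor remark on the peripheral-spectrum step: rather than invoking Liv\v{s}ic on periodic orbits, the shorter route is the usual one --- an eigenfunction $w$ of $L_t$ with $|\lambda|=1$ has $|w|$ fixed by $L$, hence constant by simplicity of the eigenvalue $1$, so $e^{it\varphi}=e^{i\theta}(w\circ\sigma)/w$ pointwise a.e., which gives the periodicity decomposition of Definition \ref{ap} directly with H\"older transfer function since $w\in C^\beta$.
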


 The following result is essential in next section.
\begin{corollary}\label{mcharacter}
Let $(\sigma_A,\psi,\mu)$ with $\psi$ aperiodic and $\int_{\Sigma_A}\psi \rd\mu=0$. Let $\mathscr C_1,\mathscr C_2\in\Sigma_A$ be two cylinder sets. Then  for any $k\in \R$ and any compact interval $I\subset\R$, we have
\be
\sqrt{n}\,\mu(\{x\in \mathscr C_1, \sigma_A^n x\in \mathscr C_2:S_n(\psi)(x)\in k \sqrt{n}+I\})\to \mu(\mathscr C_1)\mu(\mathscr C_2)\fg_\varrho(k)u(I),
\ee
as $n\to\infty$. Moreover the convergence is uniform if $k$ is taken from a compact set.
\end{corollary}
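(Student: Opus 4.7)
The plan is to deduce Corollary \ref{mcharacter} directly from the mixing local limit theorem (Theorem \ref{sft-m}), by choosing the test functions $\alpha,\beta,\gamma$ to be indicators (or approximations thereof) of the relevant sets. Concretely, I would apply MLLT with $\alpha=\mathbf{1}_{\mathscr C_1}$, $\beta=\mathbf{1}_{\mathscr C_2}$, the constant sequence $\omega_n:=k\sqrt{n}$ (so that $\omega=k$ and $K=0$), and $\gamma$ replacing $\mathbf{1}_I$. Since $\mu(\psi)=0$, the condition $\gamma[S_n(\psi)(x)-n\mu(\psi)-\omega_n]\in I$ becomes exactly $S_n(\psi)(x)\in k\sqrt n+I$, and the product $\alpha(x)\beta(\sigma_A^n x)$ is the indicator of $\{x\in\mathscr C_1,\;\sigma_A^nx\in\mathscr C_2\}$. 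Thus the left-hand side of Corollary \ref{mcharacter} coincides with the integral appearing in the MLLT, and the target is $\fg_\varrho(k)\mu(\mathscr C_1)\mu(\mathscr C_2)u(I)$.

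The first small issue is the continuity hypothesis on $\alpha,\beta$: the metric $\cD_2$ makes every cylinder set clopen, so $\mathbf{1}_{\mathscr C_1}$ and $\mathbf{1}_{\mathscr C_2}$ are continuous and bounded, and can be used in MLLT without any modification.

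The main obstacle is that $\gamma=\mathbf{1}_I$ is not continuous, whereas MLLT requires continuous compactly supported $\gamma$. I would handle this by a standard squeeze-approximation argument: fix $\eps>0$ and choose continuous, compactly supported functions $\gamma^-_\eps\le \mathbf{1}_I\le \gamma^+_\eps$ with $\int(\gamma^+_\eps-\gamma^-_\eps)\,\rd u<\eps$. This is possible because, by Theorem \ref{sft-m}, $u$ is equivalent to Lebesgue measure, hence atomless, so $u(\partial I)=0$. Sandwiching the integral against $\mathbf{1}_I$ between the integrals against $\gamma^\pm_\eps$, and applying MLLT to each of the latter, yields
\begin{align*}
&\fg_\varrho(k)\mu(\mathscr C_1)\mu(\mathscr C_2)\int\gamma^-_\eps\,\rd u\\
&\quad\le \liminf_{n\to\infty}\sqrt{n}\,\mu\bigl(\{x\in\mathscr C_1,\sigma_A^nx\in\mathscr C_2:S_n(\psi)(x)\in k\sqrt{n}+I\}\bigr)\\
&\quad\le \limsup_{n\to\infty}\sqrt{n}\,\mu\bigl(\{x\in\mathscr C_1,\sigma_A^nx\in\mathscr C_2:S_n(\psi)(x)\in k\sqrt{n}+I\}\bigr)\\
&\quad\le\fg_\varrho(k)\mu(\mathscr C_1)\mu(\mathscr C_2)\int\gamma^+_\eps\,\rd u.
\end{align*}
Letting $\eps\to 0$ gives convergence to $\fg_\varrho(k)\mu(\mathscr C_1)\mu(\mathscr C_2)u(I)$.

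Finally, uniformity in $k$ ranging over a compact set $E\subset\R$ comes for free: MLLT already gives uniformity in $\omega$ over compacta (here $\omega=k\in E$ and $K=0$), and the approximants $\gamma^\pm_\eps$ are independent of $k$, so the squeeze estimate above is uniform in $k\in E$. This completes the plan.
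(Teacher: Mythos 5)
Your proposal is correct and follows essentially the same route as the paper: deduce the statement from MLLT (Theorem \ref{sft-m}) by approximating the indicator test functions by continuous ones. Your observation that $\mathbf{1}_{\mathscr C_1}$ and $\mathbf{1}_{\mathscr C_2}$ are already continuous (cylinders being clopen in $\Sigma_A$) so that only $\gamma=\mathbf{1}_I$ needs the squeeze approximation is a small simplification of the paper's wording, which speaks of approximating all three indicators, but the argument is the same in substance.
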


\begin{proof}
This follows directly from the definition of MLLT and Theorem \ref{sft-m}. Indeed, since $(\sigma_A,\psi)$ satisfies MLLT, one can approximate the characteristic functions $\chi_{\mathscr C_1}$, $\chi_{\mathscr C_2}$ and $\gamma_I=\chi_{I}$ by a sequence of continuous functions. 
\end{proof}

\section{CLT and MLLT for conditional measures}

\begin{proposition}\label{mllt-gibbs-unstable}
Let $(\sigma_A,\Sigma_A,\mu)$ be a transitive subshift of finite type  with a Gibbs measure $\mu$. Assume $\psi$ is aperiodic, depends only on the past and $\int_{\Sigma_A}\psi(x)\rd\mu=0$. Then for $\mu$ a.e. $x$ the following holds: for any cylinder set $\mathscr C\subset \Sigma_A$ for any $k\in \R$ and any compact interval $I\subset\R$,
$$\sqrt{n}\mu_x^+(\{\omega\in\Sigma_A^+(x):\sigma_A^n(\omega)\in \mathscr C,S_n(\psi)(\omega)\in k\sqrt{n}+I\})\to\mu(\mathscr C)\fg_\varrho(k)u(I),$$
 as $n\to\infty$. Moreover the convergence is uniform if $k$ is taken from a compact set.
\end{proposition}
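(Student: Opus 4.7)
The plan is to bootstrap the MLLT for the Gibbs measure $\mu$ (Corollary~\ref{mcharacter}) to the conditional measures $\mu^+_x$ on the unstable sets $\Sigma_A^+(x)$, using the local product structure encoded in~\eqref{e-ep}. Set $A_n^{k,I,\mathscr C} := \{\omega\in\Sigma_A:\sigma_A^n\omega\in\mathscr C,\,S_n(\psi)(\omega)\in k\sqrt n+I\}$. The strategy has three ingredients: (i) a holonomy-invariance estimate showing that $\mu^+_y(A_n^{k,I,\mathscr C})$ varies only mildly as $y$ ranges over a thin past cylinder; (ii) a disintegration identity converting the integral of $\mu^+_y(A_n^{k,I,\mathscr C})$ over such a cylinder into $\mu(\mathscr C_1\cap A_n^{k,I,\mathscr C})$, to which Corollary~\ref{mcharacter} applies; (iii) combining (i) and (ii) to obtain pointwise convergence.

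For Step (i), fix $x\in\Sigma_A$ and a large $N$, and set $\mathscr C_1:=\mathscr C[x_{-N},\dots,x_{-1}]$. For any $y_1,y_2\in\mathscr C_1$ the canonical holonomy $H:\Sigma_A^+(y_1)\to\Sigma_A^+(y_2)$ acts as $(y_1^-,z^+)\mapsto(y_2^-,z^+)$; the estimate~\eqref{e-ep} gives $\mu^+_{y_1}(B)\in(1\pm\epsilon_N)\mu^+_{y_2}(H(B))$ with $\epsilon_N\to 0$. Because $\psi$ is $\beta$-H\"older and depends only on the past, a geometric sum of the shift-distance bounds yields $|S_n(\psi)(\omega)-S_n(\psi)(H\omega)|\leq C\sum_{k\geq 0}2^{-\beta(N+k+1)}=:\eta_N$, a bound \emph{uniform in} $n$ with $\eta_N\to 0$. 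For $n$ exceeding the diameter of $\mathscr C$, the event $\{\sigma_A^n\omega\in\mathscr C\}$ depends only on the shared future and is preserved by $H$. Writing $I_{\pm\eta_N}$ for the $\eta_N$-thickening/shrinking of $I$, this yields
$$(1-\epsilon_N)\mu^+_{y_2}(A_n^{k,I_{-\eta_N},\mathscr C})\leq \mu^+_{y_1}(A_n^{k,I,\mathscr C})\leq (1+\epsilon_N)\mu^+_{y_2}(A_n^{k,I_{\eta_N},\mathscr C}).$$

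For Steps (ii)--(iii), disintegration~\eqref{disintegration} gives $\mu(\mathscr C_1\cap A_n^{k,I,\mathscr C})=\int_{\mathscr C_1\cap\Sigma_A^-}\mu^+_y(A_n^{k,I,\mathscr C})\,\rd\mu^-(y)$, so multiplying by $\sqrt n/\mu^-(\mathscr C_1)$ and invoking Corollary~\ref{mcharacter} yields $\mu^-(\mathscr C_1)^{-1}\int_{\mathscr C_1\cap\Sigma_A^-}\sqrt n\,\mu^+_y(A_n^{k,I,\mathscr C})\,\rd\mu^-(y)\to\mu(\mathscr C)\fg_\varrho(k)u(I)$, uniformly for $k$ in a compact set. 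Taking $y_1=x$ and $y_2$ ranging over $\mathscr C_1\cap\Sigma_A^-$ in the Step~(i) inequality, then integrating and passing $\liminf/\limsup$ in $n$, one obtains
$$(1-\epsilon_N)\mu(\mathscr C)\fg_\varrho(k)u(I_{-\eta_N})\leq \liminf_n\sqrt n\,\mu^+_x(A_n^{k,I,\mathscr C})\leq \limsup_n\sqrt n\,\mu^+_x(A_n^{k,I,\mathscr C})\leq(1+\epsilon_N)\mu(\mathscr C)\fg_\varrho(k)u(I_{\eta_N}).$$
Since $u$ is equivalent to Lebesgue by Theorem~\ref{sft-m}, $u(I_{\pm\eta_N})\to u(I)$; letting $N\to\infty$ concludes, and uniformity in $k$ on compacta is inherited from Corollary~\ref{mcharacter}.

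The main obstacle is Step~(i): the event $\{S_n(\psi)\in k\sqrt n+I\}$ is \emph{not} exactly invariant under the stable holonomy, so the interval $I$ must be thickened or shrunk, and the key issue is whether the discrepancy stays bounded \emph{uniformly in} $n$. The cocycle being H\"older and depending only on the past is precisely what produces a summable telescoping estimate with $n$-independent bound $\eta_N=O(2^{-\beta N})$, vanishing as $N\to\infty$; this, together with the absolute continuity of $u$, ensures the thickening contributes nothing in the limit. Without the hypothesis that $\psi$ depends only on the past (guaranteed by Lemma~\ref{refine}) this approach would break down, which is why this reduction was highlighted as essential earlier in the paper.
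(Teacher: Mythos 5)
Your proposal is correct and follows essentially the same route as the paper: use past-dependence and H\"olderness to get an $n$-uniform bound $\eta_N$ on the discrepancy of Birkhoff sums under the stable holonomy, use \eqref{e-ep} to compare $\mu^+_x$ with $\mu^+_y$ for $y$ in a thin past-cylinder, disintegrate to pass from conditional to unconditional measure, apply Corollary~\ref{mcharacter}, and take $N\to\infty$. The only cosmetic differences are that the paper indexes the cylinder as $\mathscr C[x_{-\ell},\ldots,x_0]$ and writes the averaging step as an integration of the sandwiched inequality over $\omega_1\in\mathscr C_\ell(x)$ rather than explicitly invoking the Rokhlin disintegration identity as you do, which if anything is a slightly cleaner way to phrase the same step.
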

\begin{proof}
Let $I=[a,b]$, and fix small $\epsilon>0$. For any $x\in\Sigma_A$, let $\mathscr C_\ell(x):=\mathscr C[x_{-\ell},\cdots,x_0]$. For the $\epsilon$,  it is easy to see that there exists an $\ell\in \N$ such that \eqref{e-ep} holds for any $\omega_2\in\mathscr C_\ell(\omega_1)$, and also if $\omega,\hat\omega\in \mathscr C_\ell(x)$ and they have the same future, then \be\label{eq:past-1}\sum_{i=0}^\infty |\psi(\sigma_A^i\omega)-\psi(\sigma_A^i\hat\omega)|\le \epsilon/10.\ee Indeed, the existence is guaranteed by the assumption that $\phi$ is H\"older and depends only on the past. 

Denote $W_{n,k}(x,[a,b]):=\{\omega\in\Sigma_A^+(x):\sigma_A^n(\omega)\in \mathscr C,S_n(\psi)(\omega)\in k\sqrt{n}+[a,b]\}$. For any $\omega_1\in\mathscr C_\ell(x)$, we claim that $$W_{n,k}(\omega_1,[a+\epsilon,b-\epsilon])\subset H_{x,\omega_1}(W_{n,k}(x,[a,b]))\subset W_{n,k}(\omega_1,[a-\epsilon,b+\epsilon]).$$Indeed, this follows easily from the choice of $\ell$, the definition of the holonomy map $H_{\cdot,\cdot}$ and \eqref{eq:past-1}.

Therefore, by \eqref{e-ep} and \eqref{eq:past-1}, for any $\omega_1\in\mathscr C_\ell(x)$, $$(1-\epsilon)\mu_{\omega_1}^+(W_{n,k}(\omega_1,[a+\epsilon,b-\epsilon]))\le \mu_x^+(W_{n,k}(x,[a,b]))$$\be\label{in-e} \le (1+\epsilon)\mu_{\omega_1}^+(W_{n,k}(\omega_1,[a-\epsilon,b+\epsilon])).\ee Now let $\widehat{ W}_{n,k}([a,b]):=\{\omega\in\mathscr C_\ell(x):\sigma_A^n(\omega)\in \mathscr C,S_n(\psi)(\omega)\in k\sqrt{n}+[a,b]\}$. By integrating both sides of \eqref{in-e} for $\omega_1\in\mathscr C_\ell(x)$, we have $$(1-\epsilon)\mu(\widehat{ W}_{n,k}([a+\epsilon,b-\epsilon]))\le \mu_x^+(W_{n,k}(x,[a,b]))\mu(\mathscr C_\ell(x)) \le (1+\epsilon)\mu(\widehat W_{n,k}([a-\epsilon,b+\epsilon])).$$

Now notice that by Corollary \ref{mcharacter}, as $n\to\infty$, $$\sqrt{n}\mu(\widehat{W}_{n,k}([a-\epsilon,b+\epsilon]))\to \mu(\mathscr C_\ell(x))\mu(\mathscr C)\fg(k)u([a,b]+O(\epsilon)).$$Hence from this, by letting $\epsilon\to0$ ($\ell\to\infty$), we have $$\sqrt{n}\mu_x^+(W_{n,k}(x,[a,b]))\to \mu(\mathscr C)\fg(k)u([a,b]).$$
This finishes the proof.
\end{proof}

\begin{proposition}\label{clt-gibbs-unstable}
Let $(\sigma_A,\Sigma_A,\mu)$ be a transitive subshift of finite type associated with a Gibbs measure $\mu$. Assume $\psi$ is aperiodic, depends only on the past and $\int_{\Sigma_A}\psi(x)\rd\mu=0$. Then for a.e. $x$, for any interval $I\subset\R$, as $n\to\infty$
$$\mu_x^+(\{\omega\in\Sigma_A^+(x):\frac{S_n(\omega)}{\sqrt n}\in I\})\to\int_I\fg_\varrho(t)\rd t.$$
\end{proposition}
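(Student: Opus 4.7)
The plan is to deduce the CLT for the conditional measure $\mu_x^+$ from the conditional mixing local limit theorem (Proposition \ref{mllt-gibbs-unstable}) by a standard Riemann-sum argument. I first treat a compact interval $I=[a,b]$ for a typical $x\in\Sigma_A$ on which the MLLT conclusion holds, and reduce unbounded intervals to this case by tightness at the end.

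The opening move is to strip the extra cylinder constraint $\sigma_A^n\omega\in\mathscr C$ present in Proposition \ref{mllt-gibbs-unstable}: summing its conclusion over a finite partition of $\Sigma_A$ into $1$-cylinders gives, for any fixed $L>0$,
\be\label{e:mllt-uncon}
\sqrt n\,\mu_x^+\bigl(\{\omega\in\Sigma_A^+(x):S_n(\psi)(\omega)\in k\sqrt n+[0,L]\}\bigr)\longrightarrow \fg_\varrho(k)\,u([0,L]),
\ee
with convergence uniform in $k$ on compact subsets of $\R$. Next, fix large $L$ and slice $[a\sqrt n,b\sqrt n]$ into consecutive windows of length $L$ with left endpoints $k_j\sqrt n$, where $k_j:=a+jL/\sqrt n$ for $j=0,\ldots,J_n-1$, $J_n:=\lfloor(b-a)\sqrt n/L\rfloor$. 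Then
$$\mu_x^+\Bigl(\tfrac{S_n(\psi)}{\sqrt n}\in[a,b]\Bigr)=\sum_{j=0}^{J_n-1}\mu_x^+\bigl(S_n(\psi)\in k_j\sqrt n+[0,L]\bigr)+R_n,$$
where $R_n$ is the measure of a single leftover window of length $<L$ and is $O(1/\sqrt n)$ by \eqref{e:mllt-uncon}. Plugging \eqref{e:mllt-uncon} into each summand and exploiting the uniformity in $k_j\in[a,b]$,
$$\mu_x^+\Bigl(\tfrac{S_n(\psi)}{\sqrt n}\in[a,b]\Bigr)=\frac{u([0,L])}{L}\sum_{j=0}^{J_n-1}\fg_\varrho(k_j)\,\frac{L}{\sqrt n}+o(1),$$
and the Riemann sum converges to $(u([0,L])/L)\int_a^b\fg_\varrho(t)\,\rd t$ as $n\to\infty$. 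In the MLLT for transitive subshifts of finite type (Theorem \ref{sft-m}) the measure $u$ is proportional to Lebesgue measure, normalized so that the unconditional CLT is recovered when one integrates in $x$; hence $u([0,L])/L=1$ and the compact-interval case follows.

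For an unbounded interval $I$, I reduce to the compact case by tightness. Applied to $[-R,R]$, the compact case yields $\mu_x^+(|S_n/\sqrt n|\le R)\to\int_{-R}^R\fg_\varrho(t)\,\rd t$, which tends to $1$ as $R\to\infty$. Given $\eta>0$, choose $R$ with $\int_{|t|>R}\fg_\varrho<\eta$ and then $n_0$ with $\mu_x^+(|S_n/\sqrt n|>R)<2\eta$ for $n\ge n_0$; splitting $I=(I\cap[-R,R])\sqcup(I\setminus[-R,R])$ and applying the compact case to the first piece finishes the proof.

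The main obstacle is a bookkeeping one: the Riemann sum contains $J_n\asymp\sqrt n$ bins while each MLLT estimate carries an individual error of order $1/\sqrt n$ times a factor going to $0$; if that $o(1)$ factor were not \emph{uniform} in $k_j$, the total error could swamp the main term. What saves the argument is exactly the uniformity on compact sets of $k$ built into Proposition \ref{mllt-gibbs-unstable}. The only other minor point is the identification of $u$ as (a multiple of) Lebesgue measure, which is a feature of the MLLT for subshifts of finite type and is in any case forced by consistency with the unconditional CLT after averaging in $x$.
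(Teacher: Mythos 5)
Your proposal is correct and follows the same route the paper has in mind: the paper simply records this CLT as ``an immediate consequence of Proposition \ref{mllt-gibbs-unstable},'' and your Riemann-sum argument is the standard way to make that implication explicit (strip the cylinder constraint by summing over a finite partition into $1$-cylinders, tile $[a\sqrt n,b\sqrt n]$ by windows of fixed length $L$, use uniformity in $k$ on compacts to control the $\asymp\sqrt n$ accumulated errors, then pass to unbounded intervals by tightness). The one place you are slightly more careful than strictly necessary is the identification of $u$ with Lebesgue measure: Theorem \ref{sft-m} gives $u$ equivalent to Lebesgue, and your consistency-with-the-unconditional-CLT argument correctly pins it down to Lebesgue itself; this is folklore for the Guivarc'h--Hardy LLT, and the paper takes it for granted.
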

\begin{proof}
This is an immediate consequence of Proposition \ref{mllt-gibbs-unstable}.
\end{proof}


\section{Very weak Bernoulli property}

Let $T:(X,\cB,\mu)\to (X,\cB,\mu)$ and let $\mathcal{P}:=\{P_1,\ldots P_k\}$, $\mathcal{Q}:=\{Q_1,\ldots,Q_\ell\}$ be two partitions of $X$. Let $\mathcal{P}\vee\mathcal{Q}$ denote the least common refinement of $\mathcal{P}$ and $\mathcal{Q}$, i.e.
$$
\mathcal{P}\vee\mathcal{Q}=\{P_i\cap Q_j\;:\; i\leq k,j\leq \ell\}.
$$

We say that a partition $\mathcal{R}$ is {\em generating} if $\bigvee_{-\infty}^{+\infty} T^i\mathcal{R}=\cB$. We will say that a property is satisfied for $\epsilon$ a.e. atom of a partition $\mathcal P$, if the measure of the union of all atoms which don't satisfy this property is less than $\epsilon$. For two partitions $\mathcal P=(P_1,\ldots P_k)$ and $\mathcal Q=(Q_1,\ldots Q_k)$, let 
$$
\bar{d}(\mathcal{P},\mathcal{Q})=\sum_{i=1}^k\mu(P_i\triangle Q_i).
$$
The definition of $\bar{d}$ naturally extends to partitions on different spaces (see eg. (4)-(7) in \cite{OrWe}). For two sequences of partitions $\{\xi_i\}_{i=1}^n$ and $\{\eta_i\}_{i=1}^n$, define $$\bar d\Big(\{\xi_i\}_{i=1}^n,\{\eta_i\}_{i=1}^n\Big)=\frac{1}{n}\sum_{i=1}^n\bar d(\xi_i,\eta_i).$$

For a partition $\mathcal Q$ and a set $A\subset X$ let 
$$
\mathcal{Q}|A:=\{Q_i\cap A, i=1,\ldots, k\}
$$
be a partition of $A$ with the normalized measure $\mu_A$.
Our strategy of showing that a transformation is Bernoulli is to establish the very weak Bernoulli property with respect to a generating partition $\mathcal R$.
\begin{definition}A transformation $T:(X,\cB,\mu)\to (X,\cB,\mu)$ is {\bf very weak Bernoulli} {\em({\bf VWB})} if there exists a generating partition $\mathcal{R}$ satisfying the following: for every $\epsilon>0$ there exists $n>0$ such that for all $m\in \N$,  $\epsilon$ a.e. atom $r$ of $\bigvee_{i=0}^mT^i\mathcal R$ satisfies
\begin{equation}\label{eq:con}
\bar{d}\Big(\{T^{-i}\mathcal R\}_{i=1}^n,\{T^{-i}\mathcal R|r\}_{i=1}^n\Big)<\epsilon,
\end{equation}
i.e. the unconditional distribution is $\epsilon$ close to the distribution conditioned on $r$.
\end{definition}

We will use a different characterization of VWB systems. For this we recall the definition of $\epsilon$-measure preserving map.
 A map $\theta:(X,\mu)\to (Y,\nu)$ is called {\em $\epsilon$-measure preserving} if there exists a set $E'\subset  X$, $\mu(E')<\epsilon$ and such that for every $A\in X\setminus E'$, we have
$$
\left|\frac{\nu(\theta(A))}{\mu(A)}-1\right|<\epsilon.
$$
We have the following lemma:

\begin{lemma}\label{def-vwb}
A transformation $T:(X,\cB,\mu)\to (X,\cB,\mu)$ is VWB, if for every $\epsilon>0$, there exists some $n\in \N$ and a measurable set $\displaystyle{G\subset \bigvee^\infty_{i=0}T^i\mathcal R}$ (meaning it is measurable with respect to this partition) such that $\mu(G)>1-\epsilon$ and for every pair of atoms $r,\bar r\in G$, there is an $\epsilon$-measure preserving map $\Phi_{r,\bar r}:(r,\mu^r)\to (\bar r,\mu^{\bar r})$ and a set $L\subset r$ such that:
\begin{itemize}
\item[(1)] $\mu^r(L)<\epsilon$, here $\mu^r$ is the conditional measure of $\mu$ with respect to $\displaystyle{\bigvee^\infty_{i=0}T^i\mathcal R}$,
\item[(2)] If $x\notin L$, $x\in r$, then $\#\{i\in [1,n]:\Phi(x)_i=x_i\}\ge (1-\epsilon)n$.
\end{itemize}
\end{lemma}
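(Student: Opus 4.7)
The plan is to derive the $\bar{d}$-closeness form of VWB from the name-matching coupling hypothesis through three moves: first I would turn each $\Phi_{r,\bar r}$ into a genuine coupling of the conditional measures $\mu^r,\mu^{\bar r}$ and extract a $\bar{d}$-bound between the conditional distributions of $(T^{-i}\mathcal{R})_{i=1}^n$; then I would replace one of the conditionals by the unconditional $\mu$ using convexity of $\bar{d}$; and finally I would pass from the $\sigma$-algebra $\bigvee_{i=0}^\infty T^i\mathcal{R}$ (to which $G$ is adapted) down to the finite joins $\bigvee_{i=0}^m T^i\mathcal{R}$ that appear in the definition.

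For the first move I would fix $r,\bar r\in G$, let $\Phi=\Phi_{r,\bar r}$, and take $E'\subset r$ with $\mu^r(E')<\epsilon$ to be the exceptional set witnessing $\epsilon$-measure preservation. On $r_0:=r\setminus(L\cup E')$, which has $\mu^r$-mass at least $1-2\epsilon$, the $\epsilon$-measure preserving condition forces $\Phi_*(\mu^r|_{r_0})\le (1+O(\epsilon))\mu^{\bar r}$, so after rescaling by $(1+O(\epsilon))^{-1}$ the measure $(\id,\Phi)_*(\mu^r|_{r_0})$ is a sub-coupling of $\mu^r$ and $\mu^{\bar r}$ of total mass at least $1-O(\epsilon)$. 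I would then complete it arbitrarily to a coupling $\pi$ of the full conditional measures using the residual marginals of mass $O(\epsilon)$. Condition (2) guarantees that on the main sub-coupling the two name sequences $(\mathcal{R}(T^{-i}\cdot))_{i=1}^n$ disagree in at most $\epsilon n$ coordinates, while on the residual they can disagree in at most $n$; averaging in $i$ and integrating in $\pi$ yields $\bar{d}(\{T^{-i}\mathcal{R}|r\}_{i=1}^n,\{T^{-i}\mathcal{R}|\bar r\}_{i=1}^n)=O(\epsilon)$ uniformly in $r,\bar r\in G$.

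The second move is a convexity calculation: writing $\mu=\int \mu^{\bar r}\,d\bar\mu(\bar r)$ with $\bar\mu$ the quotient measure on atoms of $\bigvee_{i=0}^\infty T^i\mathcal{R}$, convexity of $\bar{d}$ in each argument together with $\bar\mu(G)>1-\epsilon$ gives $\bar{d}(\{T^{-i}\mathcal{R}|r\}_{i=1}^n,\{T^{-i}\mathcal{R}\}_{i=1}^n)=O(\epsilon)$ for every $r\in G$. For the third move I note that every atom $r_m$ of $\bigvee_{i=0}^m T^i\mathcal{R}$ is a disjoint union of infinite-join atoms $r_\infty$ and $\mu^{r_m}$ is the corresponding convex combination of the $\mu^{r_\infty}$; the total $\mu$-mass of atoms $r_\infty\notin G$ is less than $\epsilon$, so a Markov estimate shows that $\mu^{r_m}$ is $\sqrt\epsilon$-concentrated on $G$-atoms for all but a $\sqrt\epsilon$-fraction of atoms $r_m$, uniformly in $m$. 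Convexity of $\bar{d}$ then transfers the estimate of the second move to $\mu^{r_m}$ with an additional $\sqrt\epsilon$ loss, and applying the hypothesis initially with $\epsilon^2$ in place of $\epsilon$ absorbs this loss to match the VWB definition verbatim.

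The main technical obstacle is the honest construction of the coupling in the first move: the $\epsilon$-measure preservation only gives multiplicative control on a large set, so turning $\Phi$ into a coupling requires careful bookkeeping to ensure that the uncontrolled bad set $E'$ and the residual $O(\epsilon)$ discrepancy between $\Phi_*(\mu^r|_{r_0})$ and $\mu^{\bar r}$ do not destroy the name-matching estimate. Once that is handled, the remaining convexity manipulations of $\bar{d}$ and the backward martingale passage between the infinite and finite joins are routine.
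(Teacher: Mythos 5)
Your argument is correct and follows essentially the same route as the paper, which proves the lemma by citing the corresponding argument in Shields and Lemma 1.3 of Ornstein--Weiss almost in its entirety; the only modification the paper records is that $\Phi_{r,\bar r}$ is $\epsilon$-measure preserving rather than exactly measure preserving. Your write-up fills in the two steps that the paper delegates to those references: you reprove the Ornstein--Weiss reduction by explicitly constructing a coupling of $\mu^r$ and $\mu^{\bar r}$ from the name-matching map $\Phi_{r,\bar r}$ (handling the exceptional set $E'$ and the $O(\epsilon)$ distortion of the pushforward by hand), and you handle the passage from the $\sigma$-algebra $\bigvee_{i=0}^\infty T^i\mathcal R$ down to the finite joins $\bigvee_{i=0}^m T^i\mathcal R$ by a Markov inequality combined with convexity of $\bar d$, whereas the paper invokes the martingale convergence theorem to find an $M$ and a good set $G$ so that conditionals on infinite-join and finite-join atoms are $\bar d$-close for $m\geq M$. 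Your Markov route has the mild advantage of treating all $m$ uniformly in a single stroke rather than obtaining the estimate first for $m\ge M$, at the cost of an $\epsilon\mapsto\epsilon^2$ reparametrization, but the two mechanisms are interchangeable here and the overall structure (coupling from the matching map, $\bar d$-estimate between conditionals, reduction to finite joins) is the same as the one the paper imports from Shields and Ornstein--Weiss.
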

\begin{proof}The proof is almost identical to an analogous statement (for weak Bernoulli partitions) in \cite{Shie} (in particular see the reasoning in (1)-(5) in \cite{Shie}). The only difference is that we require $\Phi_{r,\bar{r}}$ to be only $\epsilon$-measure preserving (and in \cite{Shie} the map is measure-preserving). 
Fix $\epsilon>0$. Notice that \eqref{eq:con} is equivalent to existence of a set $G_m\subset \bigvee_{i=0}^mT^i\mathcal R$ with $\mu(G_m)>1-\epsilon$ and such that for every $r,\bar{r}\in G_m$, 
\begin{equation}\label{zzzz}
\bar{d}\Big(\{T^{-i}\mathcal R|r\}_{i=1}^n,\{T^{-i}\mathcal R|\bar{r}\}_{i=1}^n\Big)<\epsilon,
\end{equation}
(see (2) in \cite{Shie}). This, by Lemma 1.3. in  \cite{OrWe} will follow if we show existence of a $\epsilon/16$-measure preserving map $\Phi=\Phi_{n,r,\bar{r}}:\{T^{-i}\mathcal R|r\}_{i=1}^n\to \{T^{-i}\mathcal R|\bar{r}\}_{i=1}^n$ (with normalized measures) and a set $B\subset  \{T^{-i}\mathcal R|r\}_{i=1}^n$ with $\mu(B)<\epsilon/16$ such that 
$$
T^i\Phi(x)\text{ and } T^ix \text{ are in the same atom of  }\mathcal R, \text{ for }(1-\epsilon)n\text{ proportion of }i\in [1,n].
$$
This is the only difference with the reasoning in \cite{Shie}, where it is said that \eqref{zzzz} is equivalent to the existence of a measure preserving map.  Now the proof is identical to the proof in \cite{Shie} (see (2)-(4)). More precisely, one uses existence of regular probabilities to define 
$$
\{T^{-i}\mathcal R|r\}_{i=1}^n, \;\; r\in \bigvee_0^{\infty}T^i\mathcal R,
$$
and next we use martingale theorem to get an integer $M$ and a set   $G\subset \bigvee_0^{\infty}T^i\mathcal R$, $\mu(G)>1-\epsilon$ and so that for $m\geq M$, $r\in G$ which is a subset of some $r_m\in \bigvee_{0}^{m}T^i\mathcal R$, we have
$$
\bar{d}\Big(\{T^{-i}\mathcal R|r\}_{i=1}^n,\{T^{-i}\mathcal R|r_m\}_{i=1}^n\Big)<\epsilon.
$$
This finishes the proof.
\end{proof}

\subsection{VWB property for skew products}

Let $\sigma:(\Sigma_A,\mu)\to (\Sigma_A,\mu)$ be a transitive subshift of finite type, $\phi:\Sigma_A\to \R$ be aperiodic in $C^\beta(\Sigma_A)$, with $\int \phi \rd\mu=0$.  Let moreover $K_t:(N,\nu,d)\to (N,\nu,d)$ be an ergodic flow of {\em zero entropy}. Let  $T=T_\phi:(\Sigma_A\times N,\mu\times \nu)\to (\Sigma_A\times N,\mu\times \nu)$ be the related skew product, $T(x,y):=(\sigma(x),K_{\phi(x)}y)$. By changing $\phi$ by a coboundary (see Lemma \ref{refine}), we may assume further $\phi$ depends only on the past. We emphasize that, $T$ is ergodic, eg. \cite{CoFoSi}.
Let $\mathcal{P}_m:=(P_0,\ldots, P_{m-1})$ be the partition of $\Sigma_A$ into cylinders $\mathscr C[\omega_0]$, and let $\mathcal Q$ be a regular\footnote{Recall that this means that $\nu(V_{\eta}(\partial \mathcal Q))\to 0$ as $\eta\to 0$.} partition of $N$. Let $\mathcal R:=\mathcal P_m\times \mathcal Q$ be the partition of $\Sigma_A\times N$. 
\begin{lemma}\label{generate}
If $K_{t_0}$ is ergodic, and $\mathcal Q$ is a generating regular partition (for $K_{t_0}$) on $N$ then $\mathcal P_m\times\mathcal Q$ is a generating partition for $T$.
\end{lemma}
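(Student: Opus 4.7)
The plan is to show that $\bigvee_{i\in\Z}T^{-i}\mathcal{R}$ separates points of $\Sigma_A\times N$ mod null sets, where $\mathcal{R}=\mathcal{P}_m\times\mathcal{Q}$. Since $T^{-i}\mathcal{R}$ refines the base partition with atoms $\sigma^{-i}P_j\times N$ and $\mathcal{P}_m$ (cylinders at coordinate zero) generates $\mathcal{B}(\Sigma_A)$ under $\sigma$, the base coordinate $x$ is determined by the $\mathcal{R}$-coding of $(x,y)$. It thus suffices to show that, for $\mu$-a.e.\ $x$, the $\sigma$-algebra $\mathcal{F}_x$ on $(N,\nu)$ generated by $\{K_{S_i\phi(x)}^{-1}\mathcal{Q}:i\in\Z\}$ equals $\mathcal{B}(N)$ mod null sets, where $S_i\phi$ denotes the Birkhoff sum of $\phi$ under $\sigma$.

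The first key ingredient I would use is that for $\mu$-a.e.\ $x$, the set $\{S_i\phi(x):i\in\Z\}$ is dense in $\R$. This follows from aperiodicity and mean zero of $\phi$: these ensure that the $\R$-skew extension $(x,t)\mapsto(\sigma x,t+\phi(x))$ on $\Sigma_A\times\R$ is conservative and ergodic w.r.t.\ $\mu\times\mathrm{Leb}$ (classical, or derivable from the MLLT in Theorem~\ref{sft-m}), whence a.e.\ orbit visits every bounded interval infinitely often; intersecting over a countable base of rational intervals yields density.

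Given density, fix $n\in\Z$ and $Q\in\mathcal{Q}$; I would recover $K_{nt_0}^{-1}Q$ inside $\mathcal{F}_x$ as follows. Pick a subsequence $i_k$ with $S_{i_k}\phi(x)\to nt_0$ and set $B:=\liminf_k K_{S_{i_k}\phi(x)}^{-1}Q\in\mathcal{F}_x$. For $\eta>0$ small, by condition~(\textbf{B}) choose $j$ with $\nu(Z_j)>1-\eta$ and $\xi_j>0$ such that $y'\in Z_j,\ |t|<\xi_j$ imply $d(K_ty',y')<\eta$, and let $G_\eta:=K_{-nt_0}(Z_j\setminus V_\eta(\partial\mathcal{Q}))$. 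For $k$ large we have $|S_{i_k}\phi(x)-nt_0|<\xi_j$, so for every $y\in G_\eta$ the point $K_{S_{i_k}\phi(x)}y$ is within $\eta$ of $K_{nt_0}y$, which lies at distance $>\eta$ from $\partial\mathcal{Q}$; hence both are in the same atom of $\mathcal{Q}$, and $B=K_{nt_0}^{-1}Q$ on $G_\eta$. Since $\nu(N\setminus G_\eta)\le \eta+\nu(V_\eta(\partial\mathcal{Q}))\to 0$ as $\eta\to 0$ by regularity of $\mathcal{Q}$, we conclude $B=K_{nt_0}^{-1}Q$ mod null, so $K_{nt_0}^{-1}Q\in\mathcal{F}_x$. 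Varying $n$ and $Q$ and using that $\mathcal{Q}$ generates $\mathcal{B}(N)$ under the ergodic automorphism $K_{t_0}$ yields $\mathcal{F}_x=\mathcal{B}(N)$, as required.

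The main obstacle is establishing the a.s.\ density of $\{S_i\phi(x)\}$ in $\R$. Although this is standard for aperiodic mean-zero H\"older cocycles over transitive subshifts of finite type with a Gibbs measure, care is needed to either import it as a classical fact (Schmidt--Aaronson-type theory of $\R$-extensions) or to derive it from the MLLT estimates of Section~3 via a Borel--Cantelli-type argument on the cylinder-wise mass $\mu(\{S_n\phi\in I\})$ furnished by Corollary~\ref{mcharacter}.
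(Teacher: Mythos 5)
Your argument is correct, and its essential ingredients are the same as the paper's: the a.e.\ density of the Birkhoff sums $\{S_i(\phi)(x)\}_{i\in\Z}$ (which the paper also proves as a separate lemma via Schmidt--Aaronson essential value theory, exactly the route you flag as the preferred option), condition~(\textbf{B}) to control the discrepancy between $K_{S_{i_k}(\phi)(x)}$ and $K_{nt_0}$, regularity of $\mathcal Q$, and the assumption that $\mathcal Q$ generates under $K_{t_0}$. The difference is organizational. The paper runs a point-separation argument by contradiction: it fixes a full-measure set $Z$ on which the $K_{t_0}$-coding separates all pairs and the $K_{t_0}$-orbits avoid $\partial\mathcal Q$, and then shows that two points $(x,y),(x,y')$ with the same $T$-coding must satisfy $y=y'$ by taking $S_{n_i}(\phi)(x)\to r_0t_0$ and forcing a contradiction with the separating time $r_0$. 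You instead reconstruct each set $K_{nt_0}^{-1}Q$ inside the fiber coding $\sigma$-algebra $\mathcal F_x$ as a $\liminf$ of the sets $K_{S_{i_k}(\phi)(x)}^{-1}Q$ and verify agreement up to a symmetric difference of measure $\le\eta+\nu(V_\eta(\partial\mathcal Q))$, letting $\eta\to0$. Both approaches are sound and use the same facts; your $\sigma$-algebra formulation is somewhat cleaner in that it avoids the need to fix a single common full-measure set of pairs before arguing, while the paper's is shorter to state. One minor point worth making explicit in your write-up (the paper leaves it implicit as well) is why, when you lower $\eta$, you may pass to a possibly larger $j$ with $\nu(Z_j)>1-\eta$ and a correspondingly smaller $\xi_j$ without disturbing the already-fixed set $B$: since $B$ depends only on the subsequence $(i_k)$, the inequality $\nu(B\,\triangle\,K_{nt_0}^{-1}Q)\le\eta+\nu(V_\eta(\partial\mathcal Q))$ holds for every $\eta$, so taking $\eta\to0$ does the job.
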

To prove the above lemma, we need the following result:
\begin{lemma}If $\phi$ is aperiodic, then for $\mu$ a.e $x\in \Sigma_A$, 
$$
\{S_n(\phi)(x)\}_{n\in \Z},
$$
is dense in $\R$.
\end{lemma}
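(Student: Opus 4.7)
The plan is to deduce the density from ergodicity of the $\R$-extension of $\sigma$ by $\phi$, together with a zero-one argument on $\Sigma_A$. Define
$$
D^*:=\Big\{x\in\Sigma_A\;:\;\{S_n(\phi)(x)\}_{n\in\Z}\text{ is dense in }\R\Big\}.
$$
Since $S_n(\phi)(\sigma x)=S_{n+1}(\phi)(x)-\phi(x)$, the orbit $\{S_n(\phi)(\sigma x)\}_{n\in\Z}$ is a translate of $\{S_n(\phi)(x)\}_{n\in\Z}$ (shifted and translated by $-\phi(x)$), so density is preserved and $D^*$ is $\sigma$-invariant. Ergodicity of $(\sigma,\Sigma_A,\mu)$ forces $\mu(D^*)\in\{0,1\}$, and it suffices to prove $\mu(D^*)>0$.

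To this end, introduce the $\R$-extension
$$
\tilde\sigma_\phi:\Sigma_A\times\R\to\Sigma_A\times\R,\qquad \tilde\sigma_\phi(x,t):=(\sigma x,\,t+\phi(x)),
$$
which preserves the (infinite) product measure $\mu\times\mathrm{Leb}$. Two classical facts come into play. First, $\int\phi\,d\mu=0$ combined with the CLT (Theorem \ref{sft-m}) gives Atkinson-type recurrence of the cocycle $S_n(\phi)$, so $\tilde\sigma_\phi$ is conservative. Second, aperiodicity of $\phi$ in the sense of Definition \ref{ap} is precisely the condition that the group of essential values of the cocycle equals all of $\R$; by Schmidt's essential-values theorem this is equivalent to ergodicity of $\tilde\sigma_\phi$ with respect to $\mu\times\mathrm{Leb}$.

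Conservativity and ergodicity on the $\sigma$-finite space $\Sigma_A\times\R$ then imply, via a standard argument that uses a countable basis of open sets of finite positive measure (each visited infinitely often by a.e.\ orbit), that $(\mu\times\mathrm{Leb})$-a.e.\ orbit of $\tilde\sigma_\phi$ is dense in $\Sigma_A\times\R$. By Fubini, for $\mu$-a.e.\ $x$ there exists (in fact a Leb-positive set of) $t\in\R$ such that the $\tilde\sigma_\phi$-orbit of $(x,t)$ is dense; projecting to the second coordinate, $\{t+S_n(\phi)(x)\}_{n\in\Z}$ is dense in $\R$, and hence so is its translate $\{S_n(\phi)(x)\}_{n\in\Z}$. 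This gives $\mu(D^*)>0$, and by the zero-one argument $\mu(D^*)=1$.

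The main obstacle is establishing the ergodicity of the infinite-measure skew product $\tilde\sigma_\phi$. This is classical for aperiodic H\"older cocycles over Gibbs systems (Schmidt, Aaronson-Sarig), but in a fully self-contained write-up it would have to be derived from the MLLT in Corollary \ref{mcharacter} via a second-moment / decorrelation computation to obtain a Borel-Cantelli statement $\mu(\{S_n(\phi)(x)\in I\text{ i.o.}\})=1$ for every open interval $I$, which suffices equally well.
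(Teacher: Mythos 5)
Your proof is correct in substance and reaches the same conclusion, but it is routed differently from the paper's. The paper argues directly with essential values: assuming non-density on a positive-measure set, one exhibits an $a\in\R$ that is not an essential value of $\phi$; since the set $E(\phi)$ of essential values is a closed subgroup of $\R$, this forces $E(\phi)=b\Z$ (possibly trivial), and by Corollary 8.3.5 in Aaronson this contradicts aperiodicity as defined in Definition~\ref{ap}. Your version takes a detour through the infinite-measure skew product $\tilde\sigma_\phi$ on $\Sigma_A\times\R$: conservativity by Atkinson, ergodicity by Schmidt's theorem (which is exactly the statement $E(\phi)=\R$), then a.e.\ dense orbits from conservativity plus ergodicity plus full support of $\mu\times\mathrm{Leb}$, then Fubini. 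The key input --- that aperiodicity in the sense of Definition~\ref{ap} forces $E(\phi)=\R$ --- is identical in both arguments and is exactly what the Aaronson citation supplies; so in that sense the two proofs share their crucial nontrivial ingredient, and your detour through $\sigma$-finite ergodic theory makes the argument longer without bypassing it. Two small remarks: (i) the opening zero--one argument is redundant, since your Fubini step already yields $\mu(D^*)=1$ outright; (ii) the assertion that aperiodicity is \emph{equivalent} to $E(\phi)=\R$ should be stated more carefully (you only need the implication aperiodic $\Rightarrow$ $E(\phi)=\R$, which is what Aaronson's corollary gives; the converse direction, while true here, is not needed and would require a separate justification about constants and cohomology of the $\Z$-valued part). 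On the positive side, your phrasing makes the role of the $\R$-extension explicit and would adapt with no change to cocycles valued in other locally compact abelian groups.
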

\begin{proof}Recall, \cite{Aar,Schmi}, that a number $a\in \R$ is called an {\em essential value} of the cocycle $\phi$ if for every measurable $C\in \Sigma_A$ with $\mu(C)>0$, and every $\epsilon>0$ there exists $N$ such that 
$$
\mu\Big(C\cap (\sigma_A^{-N}C)\cap\{x\in \Sigma_A\;:\; |S_N(\phi)(x)-a|<\epsilon\}\Big)>0. 
$$
Let $E(\phi)$ denote the set of essential values. Assume that for a positive measure set $A\subset X$, $\{S_n(\phi)(x)\}_{n\in \Z}$ is not dense. This means that there exists a number $a\in \R$, which is not an essential value of $\phi$. Since the set of essential values is a closed subgroup of $\R$ (see \cite{Aar,Schmi})  it follows that $E(\phi)=b\Z$ for some $b\in \R$. This however is a contradiction with the aperiodicity assumption of $\phi$ (see eg. Corollary 8.3.5. in \cite{Aar}).
\end{proof}
\begin{proof}[Proof of Lemma \ref{generate}]
Since $\mathcal Q$ is regular, it follows that $\nu(\partial \mathcal Q)=0$. Therefore, for some $Z\subset N$, $\nu(Z)=1$ and any $y,y'\in Z$, there exists $r_0\in \Z$ such that $K_{r_0t_0}(y)$ and $K_{r_0t_0}(y')$ are not in one atom of $\mathcal Q$ and moreover, $K_{r_0t_0}(y), K_{r_0t_0}(y')\notin \partial \mathcal Q$.
Notice first that if for every $n\in \Z$, $T^n(x,y)$ and $T^n(x',y')$ are in the same atom of $\mathcal{R}$, then $x=x'$ (since $\mathcal P_m$ is generating for $\sigma_A$).  We will show that $y=y'$ (for $y,y'$ in a full measure set). If not, then for every $n\in \Z$, $K_{S_n(\phi)(x)}(y)$ and $K_{S_n(\phi)(x)}(y')$ are in one atom of $\mathcal Q$. Let $x$ be such that $\{S_n(\phi)(x)\}_{n\in \Z}$ is dense in $\R$ (this holds for a full measure set of $x\in \Sigma_A$). Let $(n_i)$ be such that $S_{n_i}(\phi)(x)\to r_0t_0$. Then for a full measue set $Z'\in N$, and $w\in \{y,y'\}$, $K_{S_{n_i}(\phi)(x)}(w)\to K_{r_0t_0}(w)$. But since $K_{r_0t_0}(y), K_{r_0t_0}(y')\notin \partial \mathcal Q$, it follows that for large enough $i$, we have that $K_{S_{n_i}(\phi)(x)}(y)$ and $K_{S_{n_i}(\phi)(x)}(y')$ are in different atoms of $\mathcal Q$. This contradiction finishes the proof.
\end{proof}

We have the following result adapted to our setting: 
\begin{lemma}\label{atom}
For $\mu$ almost every atom $r\in\bigvee^\infty_{i=0}
T^i\mathcal R$, there exist a $y_0\in N$ and a $s\in\Sigma_A$ such that $(\mu\times\nu)^r$ almost everywhere,
$$r=\{(x,y):y=y_0,x\in\Sigma_A^+(s)\}=\Sigma_A^+(s)\times\{y_0\}.$$
\end{lemma}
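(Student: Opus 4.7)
The plan is to unfold the atom of $\bigvee_{i=0}^\infty T^i\mathcal R$, express its base and fiber slices separately, and then collapse the fiber to a single point using the aperiodicity of $\phi$ together with the regularity/generating hypotheses on $\mathcal Q$. By definition, $(x,y)$ and $(x',y')$ lie in the same atom iff $T^{-i}(x,y)$ and $T^{-i}(x',y')$ belong to a common atom of $\mathcal R=\mathcal P_m\times\mathcal Q$ for every $i\ge 0$. The $\mathcal P_m$-component immediately forces $x_{-i}=x'_{-i}$ for all $i\ge 0$, which identifies the base slice with the unstable set $\Sigma_A^+(s)$ through $s$. A direct computation on the fiber gives
$$T^{-i}(x,y)=\bigl(\sigma^{-i}x,\;K_{\tau_i(x)}y\bigr),\qquad \tau_i(x):=-\sum_{k=1}^{i}\phi(\sigma^{-k}x),$$
and, after invoking the reduction in Lemma \ref{refine} so that $\phi$ depends only on the past, $\tau_i(x)$ depends only on $s$; denote this common value $\tau_i(s)$. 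The fiber condition for the atom then reads: for every $i\ge 0$, $K_{\tau_i(s)}y_0$ and $K_{\tau_i(s)}y'$ lie in a common atom of $\mathcal Q$.

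Next I would show that the sequence of shifts $\{\tau_i(s)\}_{i\ge 0}$ is dense in $\R$ for $\mu^+$-a.e.\ $s$. The auxiliary lemma preceding Lemma \ref{generate} supplies precisely this kind of statement for forward Birkhoff sums: aperiodicity of $\phi$ forces the essential-value subgroup of the cocycle to be all of $\R$, yielding density of $\{S_n(\phi)(x)\}_{n\ge 1}$ for $\mu$-a.e.\ $x$. Applying the same essential-values argument to the inverted base $\sigma^{-1}$ (for which the cocycle $-\phi\circ\sigma^{-1}$ remains aperiodic) gives density of $\{\tau_i(x)\}_{i\ge 1}$ for $\mu$-a.e.\ $x$, hence for $\mu^+$-a.e.\ $s$.

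It then remains to deduce that the common-atom condition at the dense sequence of times $\tau_i(s)$ forces $y'=y_0$. Since $\mathcal Q$ is a regular generating partition for the ergodic $K_{t_0}$ and $\nu(\partial\mathcal Q)=0$, for $\nu\times\nu$-a.e.\ pair $y_0\ne y'$ there exists $n_0\in\Z$ such that $K_{n_0 t_0}y_0$ and $K_{n_0 t_0}y'$ lie in distinct atoms of $\mathcal Q$ and both are at positive distance from $\partial\mathcal Q$; after discarding a further $\nu$-null set I may also assume $K_{n_0 t_0}y_0,K_{n_0 t_0}y'\in\bigcup_j Z_j$, making condition (\textbf{B}) of Definition \ref{growth-1} available at both points. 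Choosing $i_k$ with $\tau_{i_k}(s)\to n_0 t_0$ (possible by the previous paragraph), (\textbf{B}) gives $K_{\tau_{i_k}(s)}y_0\to K_{n_0 t_0}y_0$ and $K_{\tau_{i_k}(s)}y'\to K_{n_0 t_0}y'$; since the limits are interior points of different atoms of $\mathcal Q$, for all large $k$ the two images must lie in different atoms, contradicting the fiber condition. Hence $y'=y_0$ $(\mu\times\nu)^r$-a.e., and Rokhlin disintegration yields the claimed form $r=\Sigma_A^+(s)\times\{y_0\}$. The main obstacle is precisely this last continuity step: because $(K_t)$ is only almost continuous along orbits, one must simultaneously respect three independent ``genericity'' hypotheses — regularity of $\mathcal Q$, the continuity sets $Z_j$, and existence of a separating $n_0$ for the pair $(y_0,y')$ — within a single Fubini/disintegration bookkeeping so that the exceptional pairs contribute to a null set in each atom $r$.
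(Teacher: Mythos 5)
Your argument is essentially correct and gives a transparent, self-contained proof, but it is a genuinely different route from what the paper does: the paper simply invokes Lemma~3.3 of \cite{krv} (together with the observation that $\phi$ depends only on the past), delegating the fiber-collapse statement to that reference, which is a measure-theoretic fact about skew products with zero-entropy fibers. Your proof instead adapts, to the one-sided partition $\bigvee_{i\ge 0}T^i\mathcal R$, the density-plus-continuity mechanism that the paper itself uses to prove Lemma~\ref{generate}: you correctly identify that the atom imposes that $K_{\tau_i(s)}y_0$ and $K_{\tau_i(s)}y'$ share a $\mathcal Q$-atom for all $i\ge 0$, you get density of $\{\tau_i(s)\}_{i\ge 0}$ via essential values applied to $(\sigma^{-1},-\phi\circ\sigma^{-1})$, and you close using the regularity of $\mathcal Q$, its two-sided generating property under $K_{t_0}$, and condition~(\textbf{B}).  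What the paper's citation buys is economy and slightly weaker hypotheses: Lemma~\ref{atom} is stated in a subsection assuming only ``$K_t$ ergodic of zero entropy,'' with quasi-ellipticity invoked only later; your proof uses (\textbf{B}) of Definition~\ref{growth-1}, which technically belongs to the quasi-elliptic package (though the paper's own proof of Lemma~\ref{generate} already implicitly needs an almost-continuity assumption of the same kind, and quasi-ellipticity does hold in the setting of Theorem~\ref{main-1}, so this is a contextual rather than a substantive discrepancy). What your route buys is that the argument is local and explicit, and one can see directly that zero entropy of $K_t$ is not the mechanism — what matters is that the negative-time Birkhoff sums of the aperiodic, zero-mean cocycle are dense and that $\mathcal Q$ is a regular two-sided generator. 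Two small notational slips: the cocycle $\tau_i$ depends only on $x^-\in\Sigma_A^-$, so the a.e.\ quantifier should be over $\mu^-$-a.e.\ $s$ (not $\mu^+$); and the $\mathcal P_m$-constraint actually pins down $x_j$ for all $j\le 0$ (including $j=0$), which is a one-index shift from the definition of $\Sigma_A^+(s)$ — harmless, since $s$ is just a representative of the atom, but worth matching the paper's convention.
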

\begin{proof}
This follows from Lemma 3.3 in \cite{krv} and the fact that $\phi$ only depends on the past.
\end{proof}
If an atom $r=\Sigma_A^+(x)\times\{z\}$, then the conditional measure on $r$ is $\mu^+_x\times\delta_{z}$, where $\delta_z$ is the Dirac measure at the point $z$. To simplify the notation, we will use $\mu^+_{x,z}$ instead of $\mu^+_x\times\delta_{z}$.
The following proposition is important in proving the VWB property.
\begin{proposition}[VWB property]\label{vwb}
Assume $T$ is the skew product with the fiber $K_t$ of zero entropy and that $\mathcal Q$ is a finite partition of $N$ such that $\mathcal P_m\times \mathcal Q$ is generating for $T$. Then $T$ is very weak Bernoulli with respect to $\mathcal P_m\times \mathcal Q$ if and only if for every $\epsilon>0$, there exist a $\hat n\in\N$ and a measurable set $G\subset \Sigma_A\times N$ with $\mu\times \nu(G)>1-\epsilon$ such that if $(x,z),(\bar x,\bar z)\in G$, there exists an $\epsilon$-measure preserving map $\Phi_{(x,z),(\bar x.\bar z)}:(\Sigma ^+_A(x)\times\{z\},\mu^+_{x,z})\to (\Sigma ^+_A(\bar x)\times\{\bar z\},\mu^+_{\bar x,\bar z})$  and a set $U\subset \Sigma^+_A(x)\times\{z\}$ such that:
\begin{itemize}
\item[(1)] $\mu^+_{x,z}(U)>1-\epsilon$,
\item[(2)] $\#\{i\in[1,\hat n]:T^i(x^-,y,z)\text{ and }T^i(\bar x^-,\Phi_{(x,z),(\bar x,\bar z)}(y,z))\text{ are in the same }\mathcal P_m\times \mathcal Q \text{ atom}\}\ge (1-\epsilon)\hat n$ if $(y,z)\in U$.
\end{itemize}
\end{proposition}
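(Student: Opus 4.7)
The plan is to view Proposition \ref{vwb} as a direct repackaging of Lemma \ref{def-vwb} through the description of atoms provided by Lemma \ref{atom}. Under the hypotheses, by Lemma \ref{generate} the partition $\mathcal R:=\mathcal P_m\times\mathcal Q$ is generating for $T$, so Lemma \ref{def-vwb} applies and reduces VWB with respect to $\mathcal R$ to the existence, for every $\epsilon>0$, of some $n\in\N$ and a measurable set of atoms $G'\subset \bigvee_{i=0}^\infty T^i\mathcal R$ of total measure $>1-\epsilon$, together with $\epsilon$-measure preserving maps $\Phi_{r,\bar r}:(r,\mu^r)\to(\bar r,\mu^{\bar r})$ between any two atoms $r,\bar r\in G'$ and an exceptional set $L\subset r$ of relative measure $<\epsilon$, off of which the $T$-orbits match in $\mathcal R$-name for at least $(1-\epsilon)n$ of the first $n$ iterates.

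The next step is to rewrite ``atoms'' as points of $\Sigma_A\times N$. By Lemma \ref{atom}, modulo $\mu\times\nu$-null sets every atom of $\bigvee_{i=0}^\infty T^i\mathcal R$ has the form $\Sigma_A^+(s)\times\{y_0\}$ for some $(s,y_0)\in\Sigma_A\times N$ (with only $s^-$ affecting the set), and the conditional measure on such an atom is $\mu^+_s\times\delta_{y_0}=\mu^+_{s,y_0}$. Thus the assignment $(x,z)\mapsto \Sigma_A^+(x)\times\{z\}$ parametrizes atoms by $\Sigma_A\times N$ in a way that respects measure: taking $G\subset \Sigma_A\times N$ to be the saturation of $G'$ (the union of the atoms in $G'$) gives a measurable set with $(\mu\times\nu)(G)=\sum_{r\in G'}(\mu\times\nu)(r)>1-\epsilon$, and conversely any $G$ as in Proposition \ref{vwb} determines $G':=\{\Sigma_A^+(x)\times\{z\}:(x,z)\in G\}$ inside $\bigvee_{i=0}^\infty T^i\mathcal R$.

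Under this identification, the map $\Phi_{r,\bar r}$ becomes the map $\Phi_{(x,z),(\bar x,\bar z)}:(\Sigma_A^+(x)\times\{z\},\mu^+_{x,z})\to(\Sigma_A^+(\bar x)\times\{\bar z\},\mu^+_{\bar x,\bar z})$ of Proposition \ref{vwb}, still $\epsilon$-measure preserving. Writing a point of $\Sigma_A^+(x)\times\{z\}$ as $(x^-,y,z)$, condition (2) of Lemma \ref{def-vwb} — that the $\mathcal R$-codes of $x$ and $\Phi(x)$ agree in at least $(1-\epsilon)n$ coordinates of $[1,n]$ — is exactly the assertion that $T^i(x^-,y,z)$ and $T^i\Phi_{(x,z),(\bar x,\bar z)}(y,z)$ lie in the same $\mathcal P_m\times\mathcal Q$-atom for at least $(1-\epsilon)\hat n$ values of $i\in[1,\hat n]$, with $\hat n=n$ and $U=(\Sigma_A^+(x)\times\{z\})\setminus L$. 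Reversing the translation yields the converse direction.

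The only real work is the bookkeeping between the factor measure on the space of atoms of $\bigvee_{i=0}^\infty T^i\mathcal R$ and the restriction of $\mu\times\nu$ to saturated measurable subsets of $\Sigma_A\times N$; this is handled by Rokhlin's disintegration \eqref{disintegration}, under which the projection onto the quotient sends the saturation of $G'$ to a set of the same measure. I do not expect a substantive obstacle here, since Proposition \ref{vwb} is a pointwise reformulation of Lemma \ref{def-vwb} along the atom description of Lemma \ref{atom}.
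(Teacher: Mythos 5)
Your argument is exactly the paper's: identify atoms of $\bigvee_{i=0}^\infty T^i\mathcal R$ via Lemma \ref{atom} as $\Sigma_A^+(\cdot)\times\{\cdot\}$ with conditional measure $\mu^+_{x,z}$, then translate the abstract criterion of Lemma \ref{def-vwb} into the pointwise statement of Proposition \ref{vwb}. The paper's own proof is just this (citing \cite{krv} for the analogous bookkeeping), so your proposal is correct and follows the same route.
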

\begin{proof}
The proof is similar to Proposition 3.2 in \cite{krv}. By Lemma \ref{atom}, there exists a full measure set such that each atom in $\bigvee^\infty_{i=0}T^i\mathcal R$ is of the form $\Sigma_A^+(\cdot)\times\{\cdot\}$. Then the proposition follows from the Lemma \ref{def-vwb}.
\end{proof}


\section{Proof of Theorem \ref{main-1}}

We will show that under the assumptions of Theorem \ref{main-1} the assumptions of Proposition  \ref{vwb} are satisfied. Since the proof is quite involved, we split it into several parts.

\subsection{Summary of notations and setup}
Fix a subshift of finite type $(\Sigma_A,\sigma,\mu,\cD_2)$, and an ergodic flow $(N,K_t,\nu,d)$. Let $\phi:\Sigma_A\to\R$ be a H\"older function such that  $\int_M\phi \rd\mu=0$ and assume it is aperiodic. By Lemma \ref{refine}, $\phi$ is cohomologous to a H\"older function that only depending on the past, thus WLOG\footnote{Recall that if two cocycles $\phi$ and $\bar{\phi}$ are cohomologus, then the corresponding skew products are isomorphic.} assume $\phi(\omega)$ depends only on the past of $\omega$.

Let $\Sigma_A^-(x)$, $\Sigma_A^+(x)$ be the stable and unstable set at $x$ respectively. For $x\in \Sigma_A$, we will consider $x=(x^-,x^+)$ for $x^-=(\cdots,x_{-1},x_0)\in\Sigma_A^-$ and $x^+=(x_1,x_2,\cdots)\in\Sigma_A^+$. By Lemma \ref{atom}, the unstable manifold of the skew product on $\Sigma_A\times N$ is almost everywhere identified to $\Sigma_A^+(\cdot)\times\{\cdot\}$. Due to this, let $\mu_{x,z}^{+}:=\mu^{+}_{x}\times \delta_{z}$ be the conditional measure when restricted to unstable manifold (set) of $T_\phi$ on the product space $\Sigma_A\times N$.


Let $\mathcal{P}_m:=(P_0,\ldots, P_{m-1})$ be the partition of $\Sigma_A$ into cylinders $\mathscr C[\omega_0]$. Let $\mathcal Q$ be a finite regular partition of $N$, such that $\mathcal R:=\mathcal P_m\times \mathcal Q$ is generating for $T_\phi$ and $\lim_{\eta\to 0}\nu(U_{\eta}\partial \mathcal Q)=0$. This is guaranteed by our assumption on $K_t$ and Lemma \ref{generate}.


Fix a sufficiently small $\epsilon>0$. Let $c\in(0,\frac{1}{10})$ be such that $\nu(V_c(\partial \mathcal Q))<\frac{\epsilon}{100}$.   Let $a=a(c,\epsilon)>100$ be fixed such that 
\begin{itemize}
\item $10\epsilon^a\le c$, 
\item $2^{-[\epsilon^{-a}]\beta}\ll \epsilon^{4a}$, here $\beta$ is the H\"older exponent,
\item if $\mathcal D_2(x_1,x_2)\le 2^{-[\epsilon^{-a}]+1}$, then for any cylinder $\mathscr C\subset \Sigma_{A}^+(x_1)$ (see \eqref{e-ep}), 
\be\label{eq:e-a}\left|\frac{\mu_{x_1}^+(\mathscr C)}{\mu_{x_2}^+(H_{x_1,x_2}\mathscr C)}-1\right|\le \epsilon/4.\ee 
\end{itemize}

Let $c>\xi>0$ and $j:=j_{\epsilon^2}\in \N$ (in ({\bf C})) be such that for every $y\in Z:=Z_j$
\begin{equation}\label{eq:cz}
d(K_ty,y)<c/20
\end{equation}
for every $|t|<\xi$ (see \eqref{eq:css}) and $\nu(Z_j)\ge 1-\epsilon^2$. If necessary, by enlarging $a$ or choosing smaller $c$, we may assume $\xi\ge \epsilon^{3a}$.

\subsection{Cylinder sets}

To construct the matching map, we will work with certain cylinder sets. We define them as follows. For any $\omega\in\Sigma_A$, denote
$$B(\omega):=\Big\{\hat \omega\in\Sigma_A: \hat \omega_j= \omega_j,\; -[\epsilon^{-a}]\le j\le [\epsilon^{-a}]\Big\},$$
$$B_{k}(\omega,n):=\Big\{s\in\Sigma_A^+(\omega):S_n(\phi)(s)\in (k\sqrt{n}-\frac{\xi}{20},k\sqrt{n}+\frac{\xi}{20})\Big\}$$
 and 
 $$\hat B_{k}(\omega,n,\alpha):=B_{k}(\omega,n)\cap( \sigma^{-n}(B(\alpha))).$$

The following result gives the estimates on $\mu^+_{\omega}(\hat B_{k}(\omega,m,\alpha))$. 

\begin{lemma}\label{B-est}
Fix $k\in\mathbb R,\alpha\in\Sigma_A$, for almost every $\omega\in\Sigma_A$, then $$\sqrt n \mu_\omega^+(\hat B_{k}(\omega,n,\alpha))\to u(-\xi/20,\xi/20)\cdot \fg_\varrho(k) \mu(B(\alpha)),\text{ as }n\to\infty.$$ 
Moreover, the convergence is uniform for $k$ in any fixed compact subset of $\R$.
\end{lemma}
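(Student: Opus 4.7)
The plan is to recognize this as a direct specialization of Proposition \ref{mllt-gibbs-unstable}, applied to the cocycle $\phi$, which by the standing assumption in this section depends only on the past, is aperiodic, H\"older continuous, and has zero mean with respect to $\mu$.

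First I would observe that $B(\alpha) = \mathscr C[\alpha_{-[\epsilon^{-a}]},\ldots,\alpha_{[\epsilon^{-a}]}]$ is by definition a cylinder set in $\Sigma_A$. Next I would note that $\hat B_{k}(\omega,n,\alpha)$ is, by construction, exactly the set
$$\{s\in\Sigma_A^+(\omega)\;:\;\sigma^n(s)\in B(\alpha),\; S_n(\phi)(s)\in k\sqrt{n}+I_0\},$$
where $I_0=(-\xi/20,\xi/20)$. This is precisely the set whose conditional $\mu^+_\omega$-measure is estimated in Proposition \ref{mllt-gibbs-unstable}. Applying that proposition with $\psi=\phi$, cylinder $\mathscr C=B(\alpha)$, and any compact interval $I\subset\R$ gives the limit $\mu(B(\alpha))\fg_\varrho(k)u(I)$ for $\mu$-a.e.\ $\omega$, uniformly for $k$ in a compact set.

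The only minor point to address is that $I_0$ is open rather than compact. Since by Theorem \ref{sft-m} the measure $u$ is equivalent to Lebesgue measure on $\R$, the boundary $\{\pm\xi/20\}$ has $u$-measure zero, so $u([{-\xi/20},{\xi/20}])=u(I_0)$. A standard sandwich argument takes care of the passage: for every $\delta>0$ one has the inclusions
$$\hat B_{k}(\omega,n,\alpha)\big|_{I_{\delta}^-}\subset \hat B_{k}(\omega,n,\alpha)\subset \hat B_{k}(\omega,n,\alpha)\big|_{I_{\delta}^+},$$
with $I_{\delta}^-=[-\xi/20+\delta,\xi/20-\delta]$ and $I_{\delta}^+=[-\xi/20-\delta,\xi/20+\delta]$, both compact. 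Proposition \ref{mllt-gibbs-unstable} gives the limits $\mu(B(\alpha))\fg_\varrho(k)u(I_{\delta}^{\pm})$ for the sandwiching quantities, uniformly for $k$ in a compact set, and letting $\delta\to 0$ yields the claimed limit with the correct constants.

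There is essentially no obstacle here: the statement is tailored to be an immediate corollary of Proposition \ref{mllt-gibbs-unstable}, and the role of the lemma in the sequel is simply to record the precise asymptotic form of $\mu^+_\omega(\hat B_k(\omega,n,\alpha))$ that will be fed into the construction of the matching map. The only care needed is the compact-versus-open interval approximation, handled as above using the absolute continuity of $u$.
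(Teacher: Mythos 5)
Your proposal is correct and follows essentially the same route as the paper, which simply invokes Proposition \ref{mllt-gibbs-unstable} directly. The extra care you take with the open-versus-compact interval approximation (sandwiching by $I_\delta^\pm$ and using absolute continuity of $u$) is a reasonable refinement but not a different approach.
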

\begin{proof}
This is an immediate consequence of Proposition \ref{mllt-gibbs-unstable}.
\end{proof}

By Proposition \ref{clt-gibbs-unstable}, there exist a set $\Sigma_3\subset \Sigma_A$ with $\mu(\Sigma_3)\ge 1-\epsilon^2$, $k_0>0$ and $n_0>0$ such that for any $\omega\in\Sigma_3$, $n\ge n_0$ 

\be\label{clt-k}\mu^+_\omega(\{s\in\Sigma_A^+(\omega):|S_n(\phi)(s)|\ge k_0\sqrt n\})<\epsilon^2.\ee We may enlarge $a$ if necessary, so that $k_0\le \epsilon^{-a}$.

Let $u_\xi:=u(-\xi/20,\xi/20)$. By Lemma \ref{B-est}, there exist a set $\Sigma_4\subset\Sigma_3$ with $\mu(\Sigma_4)\ge 1-2\epsilon^2$ and $n_1\ge n_0$, such that for any $n\ge n_1$ and $k\in[-k_0,k_0]$, 

$$|\sqrt n \mu_\omega^+(\hat B_{k}(\omega,n,\alpha))- u_\xi\cdot \fg_\varrho(k) \mu(B(\alpha))|\le \epsilon^{10 a}\mu(B(\alpha)).$$

Notice that the above inequality is equivalent to 
\begin{equation}\label{B-bound}
\frac{u_\xi \cdot \fg_\varrho(k)- \epsilon^{10 a}}{\sqrt{n}}\mu(B(\alpha))\le \mu_\omega^+(\hat B_{k}(\omega,n,\alpha))\le \frac{u_\xi \cdot \fg_\varrho(k) + \epsilon^{10 a}}{\sqrt{n}}\mu(B(\alpha)).
\end{equation}

Let's remark that, by the definitions of $k_0,n_1$ (see \eqref{clt-k}), for any $\omega\in\Sigma_4$ and $n\ge n_1$, 
\begin{align}\label{eq-be}
\mu_\omega^+\Big(\bigcup_{\alpha\in\Sigma_A}\bigcup_{k\in[-k_0,k_0]}\hat B_{k}(\omega,n,\alpha)\Big)&=\mu_\omega^+(\bigcup_{\alpha\in\Sigma_A}\bigcup_{k\in[-k_0,k_0]}[B_{k}(\omega,n)\cap \sigma^{-n}(B(\alpha))])\nonumber\\&=\mu_\omega^+(\bigcup_{k\in[-k_0,k_0]}B_{k}(\omega,n))\ge 1-2\epsilon^2.
\end{align}

 \subsection{Good subsets on $\Sigma_A\times N$}

We will find ``good'' points (with respect to $T_\phi$) on the fiber $N$ as well as on the $\Sigma_A\times N$.

Since $T_\phi$ is ergodic, by Birkhoff ergodic theorem, there exist $L_1>0$ and a subset $V\subset \Sigma_A\times N$ with $\mu\times \nu(V)\ge 1-\epsilon^2$ such that for any $(x,z)\in V$ there exists a subset 
and any $L\ge L_1$,
\begin{equation}\label{E1}
\frac{1}{L}\sum_{j=0}^{L-1}\chi _{\Sigma_A\times V_c(\partial \mathcal Q)}\circ T_\phi^j (x,z)<\epsilon/20\text{, and }\frac{1}{L}\sum_{j=0}^{L-1}\chi _{\Sigma_A\times Z}\circ T_\phi^j (x,z)>1-\epsilon/20.
\end{equation}
By the definition of quasi-elliptic, there is an $i\geq i_{
\epsilon^{2a}}$ (in {\bf C}), such that \begin{equation}\label{k-e}
\nu(N_i)\ge 1-\frac{\epsilon^3}{100},\;a_i\ge 10k_0^2 n_1+100L_1+\epsilon^{-100a},\;\frac{a_i}{b_i}\le \epsilon^{1000a},\;\delta_i\le \epsilon^{10a}.
\end{equation}
Let $S_1:=V\cap (\Sigma_4\times N_i)$.

Now by Lemma \ref{atom}, there exists a full measure subset $\Gamma_0\subset \bigvee^\infty_{i=0}T^i_\phi\mathcal R$, such that if $r\in\Gamma_0$, then there exists a $z_0\in N$ and a $s\in\Sigma_A$ such that $(\mu\times\nu)^r\; a.e.$ 
$$r=\{(x,z):z=z_r,x\in\Sigma_A^+(s)\}=\Sigma_A^+(s)\times\{z_r\}.$$Let $$\Gamma_1=\{r=\Sigma_A^+(s)\times\{z_r\}\in\Gamma_0:(s,z_r)\in S_1,\mu^+_{s,z_r}(r\backslash S_1)<\epsilon/40\}.$$Since $\mu_{x,z_r}^+$ is a disintegration of $\mu\times\nu$, it follows that \be\label{gamma-1}\mu\times\nu(\Gamma_1)\ge 1-\epsilon.\ee

\subsection{Construction of the matching}

Now fix two atoms $r_1,r_2\in\Gamma_1$. By the definition of $\Gamma_1$ and Lemma \ref{atom}, there are $x,\bar x$ and $z,\bar z\in N_i$ such that $(x,z)\in S_1$ and $$r_1\overset{a.e.}{=}\Sigma_A^+(x)\times\{z\},\;\;\text{  }\;r_2\overset{a.e.}{=}\Sigma_A^+(\bar x)\times\{\bar z\}.$$

Since $z,\bar z\in N_i$, 
\begin{equation}\label{eq:zz}
d(z,K_{-\ell}\bar z)<\delta_i \;\text{ for some }\; \ell\in[0,a_i]
\end{equation}

Fix $n_2\ge n_1$ such that 
\begin{equation}\label{eq:aa}
d(\partial \mathcal Q,K_{S_{n_2}(\phi)(x)}(z))\ge c
\text{, and }\;\epsilon^{-20a}a_i\le \sqrt{n_2}\le \epsilon^{800a}b_i.
\end{equation}
This is possible because $(x,z)\in S_1$, $a_i\ge 100L_1$, and $\frac{a_i}{b_i}<\epsilon^{1000a}$.

Firstly, in order to construct the matching, we will start with the following two sets for any fixed $\alpha\in\Sigma_A$:$$\mathbf{B}_\alpha:=\bigcup_{k\in[-k_0,k_0]}\hat B_{k}(x,n_2,\alpha)\text{, and }\;{\mathbf{\bar B}}_\alpha:=\bigcup_{k\in[-k_0,k_0]}\hat B_{k}(\bar x,n_2,\alpha)$$

For any $k\in [-k_0,k_0]$, let \be\label{eq:Psi}\Psi(k):=k-\frac{\ell}{\sqrt{n_2}}.\ee Notice that by the choice of $n_2$ and $\ell\le a_i$, it follows that $\ell\le a_i\le \epsilon^{20a}\sqrt{n_2}.$ So it follows immediately that $$\Psi([-k_0+\epsilon^{10a},k_0])\subset [-k_0,k_0].$$

Let $I$ be the (finite) set of $k$'s of maximal cardinality such that, $$\mathbf{B}_\alpha=\bigcup_{k\in I}\hat B_{k}(x,n_2,\alpha)$$ and for any $k\neq k'\in I$, $$\hat B_{k}(x,n_2,\alpha)\cap \hat B_{k'}(x,n_2,\alpha)=\emptyset.$$
Since by definition, for any $y\in \hat B_{k}(x,n_2,\alpha)$, $S_{n_2}(\phi)(y)\in (k\sqrt{n_2}-\frac{\xi}{20},k\sqrt{n_2}+\frac{\xi}{20})$, it follows that $\# I\approx 20k_0\sqrt{n_2}/\xi$. 

Let
$$\tilde{\mathbf B}_\alpha:=\bigcup_{k\in\Psi (I)\cap [-k_0,k_0]}\hat B_{k}(\bar x,n_2,\alpha)\subset \mathbf{\bar B}_\alpha.$$
Since $\Psi:\Psi^{-1}(\Psi (I)\cap [-k_0,k_0])\to \Psi (I)\cap [-k_0,k_0]$ is a translation, and $\frac{\ell}{\sqrt{n_2}}\le \epsilon^{20a}$, $$\bigcup_{k\in [-k_0+\epsilon^{10a},k_0]}\hat B_{k}(\bar x,n_2,\alpha)\subset \tilde{\mathbf B}_\alpha, $$ thus it follows that $$\mathbf{\bar B}_\alpha\backslash \mathbf{ \tilde B}_\alpha\subset \bigcup_{k\in [-k_0,-k_0+\epsilon^{10a}]}\hat B_{k}(\bar x,n_2,\alpha),$$hence by Proposition \ref{mllt-gibbs-unstable} (and smoothness of the function $\fg_{\varrho}$) that $$\mu_{\bar x}^+(\tilde{\mathbf B}_\alpha)\ge \mu_{\bar x}^+({\mathbf{ \bar B}_\alpha})-\epsilon^3.$$

For any $k\in I$, we compare the measures of $\hat B_{k}(x,n_2,\alpha)$ and $\hat B_{\Psi (k)}(\bar x,n_2,\alpha)$. In fact, by triangle inequality and \eqref{B-bound}, decreasing $\xi$ if necessary to get $u_\xi<1$ (recall that $u$ is equivalent to Lebesgue),
\begin{align}\label{eee-1}
&\left|\mu_x^+(\hat B_{k}(x,n_2,\alpha))-\mu_{\bar x}^+(\hat B_{\Psi (k)}(\bar x,n_2,\alpha))\right|\nonumber\le \left|\mu_x^+(\hat B_{k}(x,n_2,\alpha))-u_\xi\frac{\fg_\varrho(k)\mu(B(\alpha))}{\sqrt{n_2}}\right|+\nonumber\\&\left| \mu_{\bar x}^+(\hat B_{\Psi (k)}(\bar x,n_2,\alpha))-u_\xi\frac{\fg_\varrho(\Psi(k)) \mu(B(\alpha))}{\sqrt{n_2}}\right|+|\fg_\varrho(k) -\fg_\varrho(\Psi(k))|\frac{u_\xi\cdot\mu(B(\alpha))}{\sqrt{n_2}}\nonumber\\&\le\frac{\epsilon^{10 a}}{\sqrt{n_2}}\mu(B(\alpha))+\frac{\epsilon^{10 a}}{\sqrt{n_2}}\mu(B(\alpha))+\frac{\epsilon^{10a}u_\xi\cdot\mu(B(\alpha))}{\sqrt{n_2}}\le \frac{3\epsilon^{10a}}{\sqrt{n_2}}\mu(B(\alpha)).
\end{align}

Now since $\phi$ depends only on the past, $S_{n_2}(\phi)(x^-,y_1)=S_{n_2}(\phi)(x^-,y_2)$ as long as $(\sigma^j(y_1))_0=(\sigma^j(y_2))_0$ for any $1\le j\le n_2$. This means that for each $k\in I$, $\hat B_{k}(x,n_2,\alpha)$ (and also $\hat B_{k}(\bar x,n_2,\alpha)$) is a union of cylinders of the form $$\mathscr C[\omega_0,\omega_1,\omega_2,\cdots,\omega_{n_2}]\subset \Sigma_A^+(x)\;\text{ ($\Sigma_A^+(\bar x)$ respectively)}.$$ Let $\prod_\alpha:=\prod_\alpha(k,x,n_2,[b_i^2/k_0^2])$ be the set of cylinders of the form $$\mathscr C[\omega_{n_2+1},\cdots,\omega_{[b_i^2/k_0^2]}]$$ such that for any cylinder $\mathscr C\subset\hat B_{k}(x,n_2,\alpha)$, $\mathscr C=\bigcup_{\mathscr C_1\in\prod_\alpha}(\mathscr C\cap \mathscr C_1)$. In particular (since $\hat B_{k}(x,n_2,\alpha)$ is a union of cylinders), 
\be\label{eq:zzjj}
\hat B_{k}(x,n_2,\alpha)=\bigcup_{\mathscr C_1\in\prod_\alpha}(\hat B_{k}(x,n_2,\alpha)\cap \mathscr C_1).
\ee


\begin{lemma}\label{le:m-k-a}
For any cylinder $\mathscr C\subset\hat B_{k}(x,n_2,\alpha)$, and $\widehat{\mathscr C}\subset\hat B_{\Psi(k)}(\bar x,n_2,\alpha)$, there is an $\epsilon/4 $ measure preserving map $\Phi_{k,\alpha,\mathscr C}:\mathscr C\to \widehat{\mathscr C}$ that maps $\mathscr C\cap \mathscr C_1$ to $\widehat{\mathscr C}\cap \mathscr C_1$ for any $\mathscr C_1\in\prod_\alpha$.
\end{lemma}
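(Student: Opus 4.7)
The plan is to define $\Phi_{k,\alpha,\mathscr C}$ atom by atom along the partition of $\mathscr C$ by $\prod_\alpha$. The requirement that $\Phi$ map $\mathscr C \cap \mathscr C_1$ into $\widehat{\mathscr C} \cap \mathscr C_1$ for every $\mathscr C_1 \in \prod_\alpha$ already dictates this block structure, and within each pair of corresponding atoms I would take $\Phi$ to be any measure-preserving bijection between the normalized conditional of $\mu^+_x$ on $\mathscr C \cap \mathscr C_1$ and the normalized conditional of $\mu^+_{\bar x}$ on $\widehat{\mathscr C} \cap \mathscr C_1$; such a bijection exists because the Gibbs conditionals restricted to these cylinders are non-atomic standard probability spaces. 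A routine computation (splitting an arbitrary set $A \subset \mathscr C$ along $\prod_\alpha$ and using per-atom measure preservation) shows that $\Phi$ is globally $\epsilon/4$-measure-preserving provided
\[
\Big|\frac{\mu^+_x(\mathscr C \cap \mathscr C_1)/\mu^+_x(\mathscr C)}{\mu^+_{\bar x}(\widehat{\mathscr C} \cap \mathscr C_1)/\mu^+_{\bar x}(\widehat{\mathscr C})} - 1\Big| < \epsilon/4
\]
holds for every $\mathscr C_1 \in \prod_\alpha$ outside an exceptional set of relative $\mu^+_x$-measure $< \epsilon/4$.

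I would prove this ratio estimate via \eqref{eq:e-a} applied after a shift by $\sigma^{n_2}$. The crucial geometric observation is that both $\mathscr C$ and $\widehat{\mathscr C}$ lie in $\sigma^{-n_2}(B(\alpha))$, so every $y \in \mathscr C \cap \mathscr C_1$ and $\bar y \in \widehat{\mathscr C} \cap \mathscr C_1$ must agree at every coordinate indexed by $[n_2 - [\epsilon^{-a}], n_2 + [\epsilon^{-a}]] \cup [n_2+1, [b_i^2/k_0^2]]$; in particular $(\sigma^{n_2}y)_j = (\sigma^{n_2}\bar y)_j = \alpha_j$ for all $|j| \leq [\epsilon^{-a}]$, so $\cD_2(\sigma^{n_2}y, \sigma^{n_2}\bar y) \leq 2^{-[\epsilon^{-a}]+1}$. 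Applying \eqref{eq:e-a} to the shifted continuation cylinder $\mathscr C_1' := \sigma^{n_2}\mathscr C_1 \subset \Sigma_A^+(\sigma^{n_2}y)$ yields
\[
\Big|\frac{\mu^+_{\sigma^{n_2}y}(\mathscr C_1')}{\mu^+_{\sigma^{n_2}\bar y}\big(H_{\sigma^{n_2}y,\sigma^{n_2}\bar y}(\mathscr C_1')\big)} - 1\Big| \leq \epsilon/4.
\]

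To pass from this shifted comparison to the desired ratio, I would invoke the Gibbs structure of $\mu$: modulo Gibbs normalizations absorbed by the local product structure, $\mu^+_x(\mathscr C \cap \mathscr C_1)/\mu^+_x(\mathscr C)$ equals the unstable probability of extending the prefix $\mathscr C$ by the continuation $\mathscr C_1$, and its dependence on $\mathscr C$ (and on the past $x^-$) is controlled entirely by the H\"older modulus of the Gibbs potential. Since $\mathscr C$ and $\widehat{\mathscr C}$ coincide on the gluing block $B(\alpha)$ of length $2[\epsilon^{-a}]+1$ centered at position $n_2$, the two conditional probabilities differ only by a multiplicative factor of order $O(2^{-[\epsilon^{-a}]\beta})$, which by the setup choice $2^{-[\epsilon^{-a}]\beta} \ll \epsilon^{4a}$ is negligible relative to $\epsilon/4$. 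Combining this with the shifted estimate yields the ratio bound for every (not merely most) $\mathscr C_1 \in \prod_\alpha$.

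The main obstacle is the bookkeeping in the last step: one must check that the Gibbs normalizations hidden inside $\mu^+_x(\mathscr C)$ and $\mu^+_{\bar x}(\widehat{\mathscr C})$ cancel cleanly when one forms the conditional probabilities, leaving only the H\"older error across $B(\alpha)$ and the holonomy error from \eqref{eq:e-a}. This is exactly what the choices at the start of the section are engineered to accommodate, and in particular the bound $2^{-[\epsilon^{-a}]\beta} \ll \epsilon^{4a}$ provides enough slack to keep the combined error strictly below $\epsilon/4$.
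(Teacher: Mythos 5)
Your core mechanism is the same as the paper's: shift by $\sigma^{n_2}$, note that both shifted points land in $B(\alpha)$ so $\cD_2(\sigma^{n_2}y,\sigma^{n_2}\bar y)$ is small, apply \eqref{eq:e-a}, and transport the resulting holonomy estimate back to the cylinders via the shift. The difference is in how you close the argument. The paper takes $\Phi_{k,\alpha,\mathscr C}$ to be the $\sigma^{n_2}$-conjugate of the holonomy map $H_{\sigma^{n_2}\omega_1,\sigma^{n_2}\omega_2}$ itself (which automatically preserves the $\prod_\alpha$-blocks since holonomy fixes future coordinates), and then invokes \emph{exact} invariance of conditional measures: the normalized restriction of $\mu^+_x$ to $\mathscr C$ pushed forward by $\sigma^{n_2}$ \emph{is} $\mu^+_{\sigma^{n_2}y}$, identically, by $\sigma$-invariance of $\mu$ and uniqueness of Rokhlin disintegration. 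That equality is what lets \eqref{eq:e-a} translate directly into the $\epsilon/4$-measure-preservation of $\Phi_{k,\alpha,\mathscr C}$ without any additional loss.

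Your last step instead tries to re-derive this transfer by a direct comparison of conditional probabilities through the H\"older modulus of the Gibbs potential, producing an extra $O(2^{-[\epsilon^{-a}]\beta})$ error to be absorbed. That step is unnecessary: the quantity $\mu^+_x(\mathscr C\cap\mathscr C_1)/\mu^+_x(\mathscr C)$ equals $\mu^+_{\sigma^{n_2}y}(\sigma^{n_2}(\mathscr C\cap\mathscr C_1))$ on the nose, not merely up to a Gibbs-potential error, so the shifted holonomy bound from \eqref{eq:e-a} already gives the ratio estimate with nothing left to check. Your per-block arbitrary measure-preserving bijection is also fine but is more than needed; using the conjugated holonomy map directly avoids the gluing bookkeeping and the vagueness you flag at the end. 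So: correct strategy and the right ingredients, but replace the Gibbs-potential comparison with the exact $\sigma$-invariance of the conditional measures and the proof tightens to the paper's.
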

\begin{proof}
Notice that if $\omega_1\in\mathscr C$ and $\omega_2\in \widehat{\mathscr C}$, then it follows by the definition of $\hat B_{k}(x,n_2,\alpha)$ that $$\mathcal D_2(\sigma^{n_2}\omega_1,\sigma^{n_2}\omega_2)\le 2^{-[\epsilon^{-a}]}.$$Therefore by \eqref{eq:e-a}, there is an $\epsilon/4$ measure preserving map (given by the holonomy map) that maps $\sigma^{n_2}(\mathscr C\cap \mathscr C_1)$ to $\sigma^{n_2}(\widehat{\mathscr C}\cap \mathscr C_1)$ for any $\mathscr C_1\in\prod_\alpha$. By  invariance of the conditional measure, it follows that there is an $\epsilon/4$ measure preserving map $\Phi_{k,\alpha,\mathscr C}$ that maps $\mathscr C\cap \mathscr C_1$ to $\widehat{\mathscr C}\cap \mathscr C_1$ for any $\mathscr C_1\in\prod_\alpha$.
\end{proof}

Now combining \eqref{eee-1}, \eqref{eq:zzjj} and the above lemma, 
by discarding a set of measure at most $\frac{3\epsilon^{10a}}{\sqrt {n_2}}\mu(B(\alpha))$ from the set $\hat B_{k}(x,n_2,\alpha)$ or $\hat B_{\Phi(k)}(\bar x,n_2,\alpha)$, we get an $\epsilon/4$ measure preserving map $\Phi_{k,\alpha}$ from $\hat B_{k}(x,n_2,\alpha)$ to $\hat B_{\Psi(k)}(\bar x,n_2,\alpha)$ that maps 
 $\mathscr C\cap \mathscr C_1$ to $\widehat{\mathscr C}\cap \mathscr C_1$ for any $\mathscr C_1\in\prod_\alpha$.

%
%

 Combining all $\{\Phi_{k,\alpha}\}_{k\in I}$, we thus obtain an $\epsilon/4$ measure preserving map $\Phi_\alpha$ from $\mathbf B_\alpha$ to $\mathbf{\bar B}_\alpha$, by possibly discarding a subset of measure (from \eqref{eee-1})  \be\label{eq-es-i}\#I\cdot \frac{3\epsilon^{10a}}{\sqrt{n_2}}\mu(B(\alpha))\le 60k_0\epsilon^{10a}/\xi\mu(B(\alpha))\le \epsilon^{5a}\mu(B(\alpha)),\ee here we uses the fact that $\epsilon^{3a}\le \xi$ and $k_0\le \epsilon^{-a}$.

Moreover, by the definition of $\Phi_{\alpha}$ (more precisely $\Phi_{k,\alpha}$),  it follows that for every $y\in B_{k}(x,n_2,\alpha)\cap  \mathscr C_1$, we have that $\Phi_{k,\alpha}(y)\in B_{\Psi(k)}(x,n_2,\alpha)\cap  \mathscr C_1$ for any $\mathscr C_1\in\prod_\alpha$, which, by the definiton of $\Psi$ (see \eqref{eq:Psi}) implies that (recall that $\phi$ depends only on the past)
\begin{equation}\label{eq:sm2}
S_{n_2}(\phi)(x^-,y)-S_{n_2}(\phi)(\bar x^-,\Phi_\alpha(y))=\ell+\theta,
\end{equation}
for some $\theta:=\theta_{y,\Phi_\alpha(y)}\in (-\frac{\xi}{20},\frac{\xi}{20})$.
Moreover, for every $\ell\in [n_2,[b_i^2/k_0^2]]$,
\begin{equation}\label{eq:new2}
y_{\ell}=\Phi(y)_\ell.
\end{equation}

Varying $\alpha$, by possibly discarding a set of total measures at most (from \eqref{eq-be} and \eqref{eq-es-i}) $4\epsilon^{2}+2\epsilon^{5a}\leq \epsilon/2$, we can now glue the maps $\{\Phi_\alpha\}_{\alpha}$, to obtain an $\epsilon/2$ measure preserving map $\bar{\Phi}:\Sigma_A^+(x)\times\{z\}\to \Sigma_A^+(\bar x)\times\{\bar{z}\}$.  We now intersect the sets $\Sigma_A^+(x)\times\{z\}$ and  $\Sigma_A^+(\bar x)\times\{\bar{z}\}$ with $S_1$ (see the definition of $\Gamma_1$). By further discarding the set of points in $(\Sigma_A^+(x)\times \{z\})\cap S_1$ and $(\Sigma_A^+(\bar x)\times\{\bar{z}\})\cap S_1$ for which \eqref{clt-k} holds, and noticing the measure estimate in \eqref{clt-k} and the definition of $\Gamma_1$, we can restrict $\bar{\Phi}$ to obtain an $\epsilon$-measure preserving map 
\begin{equation}\label{eq:psi}\Phi:\Sigma_A^+(x)\times\{z\}\to \Sigma_A^+(\bar x)\times\{\bar{z}\}.
\end{equation}
  We have the following:

\begin{lemma}\label{lee-1} 
For any $n\in [n_2,[b_i^2/k_0^2]]$, 
\begin{equation}\label{S-est}
S_n(\phi)(x^-,y)-S_n(\phi)(\bar x^-,\Phi(y))=S_{n_2}(\phi)(x^-,y)-S_{n_2}(\phi)(\bar x^-,\Phi(y))+O(\epsilon^{4a}).
\end{equation}
\end{lemma}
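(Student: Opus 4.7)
The plan is to split
\[
S_n(\phi)(x^-,y)-S_n(\phi)(\bar x^-,\Phi(y))=\bigl[S_{n_2}(\phi)(x^-,y)-S_{n_2}(\phi)(\bar x^-,\Phi(y))\bigr]+\sum_{j=n_2}^{n-1}D_j,
\]
with $D_j:=\phi(\sigma^j(x^-,y))-\phi(\sigma^j(\bar x^-,\Phi(y)))$. Since the bracketed term is precisely the quantity on the right-hand side of \eqref{S-est}, it suffices to prove $\sum_{j=n_2}^{n-1}D_j=O(\epsilon^{4a})$, and this will follow from a Hölder bound on each $D_j$.

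The key observation is the length of the window on which $y$ and $\Phi(y)$ share coordinates. By \eqref{eq:new2}, $y_\ell=\Phi(y)_\ell$ for every $\ell\in[n_2,[b_i^2/k_0^2]]$. Moreover, by construction $y\in\hat B_k(x,n_2,\alpha)$ and $\Phi(y)\in\hat B_{\Psi(k)}(\bar x,n_2,\alpha)$, so $\sigma^{n_2}y$ and $\sigma^{n_2}\Phi(y)$ both lie in the cylinder $B(\alpha)$; this forces them to agree additionally on the block $[n_2-[\epsilon^{-a}],n_2+[\epsilon^{-a}]]$. Combining, $y_\ell=\Phi(y)_\ell$ for every $\ell$ in the full interval $[n_2-[\epsilon^{-a}],[b_i^2/k_0^2]]$.

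For any $j\in[n_2,n-1]\subset[n_2,[b_i^2/k_0^2]]$, the shifted bilateral sequences $\sigma^j(x^-,y)$ and $\sigma^j(\bar x^-,\Phi(y))$ therefore coincide at every shifted coordinate $m\in[-(j-n_2+[\epsilon^{-a}]),0]$; disagreements can occur only at shifted positions $m<-(j-n_2+[\epsilon^{-a}])$, coming from the distinct pasts $x^-\neq\bar x^-$ and from the possibly different early-future coordinates $y_\ell,\Phi(y)_\ell$ with $1\le\ell\le n_2-[\epsilon^{-a}]-1$. Since $\phi$ depends only on the past and is $\beta$-Hölder in the metric $\cD_2$, these remote disagreements yield
\[
|D_j|\le C\cdot 2^{-\beta(j-n_2+[\epsilon^{-a}])}.
\]
Summing the geometric series gives
\[
\sum_{j=n_2}^{n-1}|D_j|\le\frac{C}{1-2^{-\beta}}\cdot 2^{-\beta[\epsilon^{-a}]},
\]
which is $O(\epsilon^{4a})$ by the standing choice $2^{-[\epsilon^{-a}]\beta}\ll\epsilon^{4a}$, establishing \eqref{S-est}.

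The argument is little more than Hölder bookkeeping once the agreement window $[n_2-[\epsilon^{-a}],[b_i^2/k_0^2]]$ has been correctly identified from \eqref{eq:new2} and the $B(\alpha)$-condition built into $\hat B_k(x,n_2,\alpha)$. The one point requiring care is that the metric $\cD_2$ measures both past and future agreement while $\phi$ depends only on the past; this is what lets one ignore discrepancies at positions far in the remote past (and in the far future) without paying any price and produces the clean one-sided geometric bound above.
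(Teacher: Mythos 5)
Your argument is correct and essentially identical to the paper's: the paper likewise rewrites the difference via the cocycle identity as a telescoping sum $\sum_j \big[\phi(\sigma^{n_2+j}(x^-,y))-\phi(\sigma^{n_2+j}(\bar x^-,\Phi(y)))\big]$, bounds each term by $C_2\,2^{-(j+[\epsilon^{-a}])\beta}$ via H\"older continuity using exactly the agreement window you identify (from \eqref{eq:new2} together with $\sigma^{n_2}y,\sigma^{n_2}\Phi(y)\in B(\alpha)$), and sums the geometric series to conclude with $2^{-[\epsilon^{-a}]\beta}\ll\epsilon^{4a}$. Your exposition is slightly more explicit about how the one-sided dependence of $\phi$ interacts with the two-sided metric $\cD_2$, but the substance is the same.
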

\begin{proof}
Notice first that by cocycle identity, for any $n\ge n_2$, 
\begin{align}\label{E-sum}
&S_n(\phi)(x^-,y)-S_n(\phi)(\bar x^-,\Phi(y))-(S_{n_2}(\phi)(x^-,y)-S_{n_2}(\phi)(\bar x^-,\Phi(y)))\nonumber\\&=S_{n-n_2}(\phi)(\sigma^{n_2}(x^-,y))-S_{n-n_2}(\phi)(\sigma^{n_2}(\bar x^-,\Phi(y)))\nonumber\\&=\sum_{j=1}^{n-n_2}\left[\phi(\sigma^{n_2+j}(x^-,y))-\phi(\sigma^{n_2+j}(\bar x^-,\Phi(y)))\right].
\end{align}
Since $\phi$ is H\"older, there are a constant $C_2>0$ and a $\beta>0$ such that for any $\omega_1,\omega_2\in \Sigma_A$, $$|\phi(\omega_1)-\phi(\omega_2)|\le C_2[\cD_2(\omega_1,\omega_2)]^{\beta}.$$
Notice the definition of $\Phi$ (in particular \eqref{eq:new2}) (and the fact that  $\sigma_A^{n_2}(y),\sigma_A^{n_2}(\Phi y)\in B(\alpha)$), we have 
\begin{align}\label{eq:th}
\left|\phi(\sigma^{n_2+j}(x^-,y))-\phi(\sigma^{n_2+j}(\bar x^-,\Phi(y)))\right|&\le C_2[\cD_2(\sigma^{n_2+j}(x^-,y),\sigma^{n_2+j}(\bar x^-,\Phi(y)))]^{\beta}\nonumber\\
&\le C_2 2^{-(j+[\epsilon^{-a}])\beta}.
\end{align}
Therefore by \eqref{eq:th}, the absolute value of \eqref{E-sum} is bounded from above by $$C_2\sum_{j=1}^{n-n_2} 2^{-(j+[\epsilon^{-a}])\beta}\le C_22^{-[\epsilon^{-a}]\beta}\sum_{j=1}^{\infty} 2^{-j\beta}=C_22^{-[\epsilon^{-a}]\beta}\ll \epsilon^{4a}.$$This finishes the proof of \eqref{S-est}.
\end{proof}

\subsection{Concluding the proof}

We are now able to conclude the proof of Theorem \ref{main-1} by applying Proposition \ref{vwb}.

For every $\epsilon>0$, let $\hat n:=[b_i^2/k_0^2-1]$ and $G:=\{(s,z_r):r=\Sigma_A^+(s)\times \{z_r\}\in\Gamma_1\}$ (by \eqref{gamma-1}, $\mu(G)\ge 1-\epsilon$). Let $\Phi:\Sigma^+_A(x)\times\{z\}\to \Sigma^+_A({\bar{x}})\times{\bar{z}}$ be the $\epsilon$- measure preserving map constructed above (see \eqref{eq:psi}). Let $U\subset \Sigma^+_A(x)\times\{z\}$ be the set on which $\Phi$ is defined. Since $\Phi$ is $\epsilon$-measure preserving it follows that 
(1) in Proposition \ref{vwb} holds.

\begin{lemma}
For any $y\in \Sigma_A^+(x)$ (for which $\Phi$ is defined),
$$\#\bigg\{j\in[1,\hat n]:\begin{array}{l}T^j(x^-,y,z)\text{ and }T^j(\bar x^-,\Phi_{(x,z),(\bar x,\bar z)}(y,z))\\\text{are in the same atom of}\;\mathcal P_m\times \mathcal Q.\end{array}\bigg\}\ge (1-\epsilon)\hat n.$$
\end{lemma}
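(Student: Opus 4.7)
My plan is to verify the atom-matching condition for the product partition $\mathcal P_m\times \mathcal Q$ by checking its two components separately. For the $\mathcal P_m$ component, I would appeal directly to the construction of $\Phi$: formula \eqref{eq:new2} guarantees $y_j=\Phi(y)_j$ for every $j\in[n_2,\hat n]$, hence the $0$-th coordinates of $\sigma^j(x^-,y)$ and $\sigma^j(\bar x^-,\Phi y)$ agree throughout that range. The exceptional range $j<n_2$ has density at most $n_2/\hat n\le \epsilon^{1000a}$, coming from $\sqrt{n_2}\le \epsilon^{800a}b_i$, $k_0\le \epsilon^{-a}$ and $\hat n\sim b_i^2/k_0^2$, so it contributes negligibly.

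For the $\mathcal Q$ component, write $s_j:=S_j(\phi)(x^-,y)$ and $\bar s_j:=S_j(\phi)(\bar x^-,\Phi y)$. Combining \eqref{eq:sm2} with Lemma \ref{lee-1} yields
$$s_j-\bar s_j=\ell+\theta_j+O(\epsilon^{4a}),\qquad |\theta_j|<\xi/20,$$
for every $j\in[n_2,\hat n]$. For those $j$ with $s_j\in[0,b_i]$, I would invoke condition \textbf{(C)} applied to the pair $(z,\bar z)\in N_i\times N_i$ with $t_{z,\bar z}=\ell$ and accuracy $\epsilon^{2a}$, at time $t=s_j$: provided $K_{s_j}z\in Z_{j_{\epsilon^{2a}}}$, this gives $d(K_{s_j}z,K_{s_j-\ell}\bar z)<\epsilon^{2a}$. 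Next, since $|s_j-\ell-\bar s_j|<\xi$, the almost continuity condition \textbf{(B)} applied at $K_{\bar s_j}\bar z\in Z$ with $\eta=c/20$ yields $d(K_{s_j-\ell}\bar z,K_{\bar s_j}\bar z)<c/20$. The triangle inequality then produces $d(K_{s_j}z,K_{\bar s_j}\bar z)<c$, so whenever $K_{\bar s_j}\bar z\notin V_c(\partial \mathcal Q)$ the two points lie in the same atom of $\mathcal Q$. For indices with $s_j\in[-b_i,0)$ I would repeat the argument using \textbf{(C)} for the swapped pair $(\bar z,z)$ with the number $\ell'=t_{\bar z,z}\in[0,a_i]$ provided by \textbf{(A)}, taking $t=-s_j\in[0,b_i]$.

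Finally, I would bound the density of indices $j$ on which some side condition of the argument fails: the bound $|s_j|\le b_i$ fails on a set of density at most $\epsilon^2$ by \eqref{clt-k} (already imposed on the domain of $\Phi$); each of $K_{s_j}z\in Z_{j_{\epsilon^{2a}}}$, $K_{\bar s_j}\bar z\in Z$, and $K_{\bar s_j}\bar z\notin V_c(\partial \mathcal Q)$ holds outside a set of density at most $\epsilon/20$ by the Birkhoff averages \eqref{E1}, which are applicable to the orbits of $((x^-,y),z)$ and $((\bar x^-,\Phi y),\bar z)$ because the domain of $\Phi$ has been restricted so that both basepoints lie in $S_1\subset V$, via the condition $\mu^+_{s,z_r}(r\setminus S_1)<\epsilon/40$ in the definition of $\Gamma_1$. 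Summing, the exceptional set has density less than $\epsilon$. The main obstacle is the bookkeeping around the joint complement of the various ``good sets'' $Z$, $Z_{j_{\epsilon^{2a}}}$, $N_i$ and $V$ along both orbits, together with the sign issue for $s_j$ that forces the symmetric application of \textbf{(A)}--\textbf{(C)} with the swapped pair.
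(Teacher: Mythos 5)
Your proposal follows the paper's proof in all essential respects: the $\mathcal P_m$ component is handled via \eqref{eq:new2}, the $\mathcal Q$ component by combining \eqref{eq:sm2} and Lemma~\ref{lee-1} with the almost-continuity estimate \eqref{eq:cz} and condition (\textbf{C}), and the density of exceptional $j$'s is controlled via \eqref{clt-k} together with the Birkhoff bound \eqref{E1}, applicable because the domain of $\Phi$ has been restricted so that both basepoints $(x^-,y,z)$ and $(\bar x^-,\Phi(y),\bar z)$ lie in $S_1\subset V$.

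The one genuine departure is your explicit case split on the sign of $s_j$. You are right that (\textbf{C}) as stated only covers $t\in[0,b_i]$, whereas the paper's proof bounds $|s_j|\le b_i$ and invokes (\textbf{C}) without addressing the sign. However, the fix you propose for $s_j<0$ does not close that gap. Applying (\textbf{C}) to the swapped pair $(\bar z,z)$ with $\ell'=t_{\bar z,z}$ at time $t=-s_j\ge 0$ produces a bound on $d\bigl(K_{-s_j}\bar z,K_{-s_j-\ell'}z\bigr)$, a comparison of points obtained by flowing \emph{forward} from $\bar z$ and $K_{-\ell'}z$. What the argument actually needs is a bound on $d\bigl(K_{s_j}z,K_{s_j-\ell}\bar z\bigr)$, a comparison of points obtained by flowing \emph{backward} from $z$ and $K_{-\ell}\bar z$. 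Since $K_t$ is not assumed to be an isometry --- and the paper's target examples are precisely the non-isometric ones --- the first estimate cannot be transported to the second, and moreover the number $\ell'$ produced by (\textbf{A}) for the pair $(\bar z,z)$ bears no relation to $\ell=t_{z,\bar z}$, which is what enters the definition of $\Psi$ and hence of $\Phi$. The correct repair is instead to strengthen the definition so that (\textbf{C}) holds for all $t\in[-b_i,b_i]$ (the rigidity-based examples in Section~\ref{sec:qe} satisfy this once $\tilde B_n$ is built from both forward and backward returns); with that change the sign case split disappears and your $s_j\ge 0$ argument covers everything.
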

\begin{proof} Notice that by  \eqref{eq:sm2} and Lemma \ref{lee-1} for every $j\in [n_2,\hat n]$,
$$
\Big|(S_j(\phi)(x^-,y)(z)-S_j(\phi)(\bar{x}^-,\Phi(y))(\bar{z}))-\ell\Big|\leq \theta+O(\epsilon^{4a})
$$
Therefore, for some $\bar{\theta}:=\theta+O(\epsilon^{4a}) <\xi/10$,
$$
K_{S_j(\phi)(\bar x^-,\Phi(y))-\bar{\theta}}(\bar z)=K_{S_j(\phi)(x^-,y)(z)-\ell}(\bar z).
$$
So for every $j\in [n_2,\hat n]$ for which $K_{S_j(\phi)(\bar x^-,\Phi(y))}(\bar z)\in Z$, by \eqref{eq:cz},
\begin{equation}\label{eq:tr}
d\Big(K_{S_j(\phi)(\bar x^-,\Phi(y))}(\bar z),K_{S_j(\phi)(x^-,y)(z)-\ell}(\bar z)\Big)<c/20.
\end{equation}
Moreover, since by definition $y\in \Sigma_A^+(x)$ satisfies the estimate in \eqref{clt-k}, it follows that $|S_j(\phi)(x^-,y)(z)|\leq k_0\sqrt{j}\leq k_0\sqrt{\hat n}\leq k_0(b_i/k_0)\leq b_i$. Therefore, by \eqref{eq:zz} and (\textbf{C}) in the definition of quasi-elliptic, for every $j\in [n_2,\hat n]$ for which $K_{S_j(\phi)(x^-,y)(z)}(z)\in Z$
$$
d\Big(K_{S_j(\phi)(x^-,y)(z)}(z),K_{S_j(\phi)(x^-,y)(z)-\ell}(\bar z)\Big)<c/100.
$$
This together with \eqref{eq:tr} implies that for every $j\in [n_2,\hat n]$ for which $K_{S_j(\phi)(\bar x^-,\Phi(y))}(\bar z)\in Z$ and $K_{S_j(\phi)(x^-,y)(z)}(z)\in Z$, we have 
$$
d\Big(K_{S_j(\phi)(x^-,y)(z)}(z),K_{S_j(\phi)(x^-,\Phi(y))(\bar{z})}(\bar z)\Big)<c/10.
$$
This implies that for every $j\in [n_2,\hat n]$ for which $K_{S_j(\phi)(\bar x^-,\Phi(y))}(\bar z)\in Z\setminus V_c(\partial \mathcal Q)$ and $K_{S_j(\phi)(x^-,y)(z)}(z)\in Z$, 
$$
K_{S_j(\phi)(x^-,y)(z)}(z)\;\;\text{ and }\;\;K_{S_j(\phi)(x^-,\Phi(y))(\bar{z})}(\bar z) \text{ are in one atom of }\mathcal Q.
$$
By the definition of $\Phi$, $(x^-,\Phi(y),\bar{z})\in S_1$ and so by 
\eqref{E1} it follows that the proportion of such $j\in [n_2,\hat n]$ is at least $(1-\epsilon) \hat n$ (we also use that, by \eqref{eq:aa}, we have $n_2\le c^{20}\epsilon^{8}\hat n$).
Moroever, by \eqref{eq:new2}, for every $j\in [n_2,\hat n]$, $\sigma_A^j(y)$ is in the same atom of $\mathcal P_m$ as $\sigma_A^j(\Phi(y))$. Therefore, for  at least $(1-\epsilon)\hat n$ proportion of $j\in [n_2,\hat n]$ 
$$
T^j(x^-,y,z)\text{ and } T^j(\bar{x}^-,\Phi(y),\bar{z})\text{ are in one atom of  } \mathcal P_m\times \mathcal Q.
$$
This finishes the proof.
\end{proof}


This lemma finishes the proof of (2) in Proposition \ref{vwb}, hence completing the proof of Theorem \ref{main-1}.

\section{Quasi-elliptic flows}\label{sec:qe}

In this section we show that some natural flows are quasi-elliptic and each one of them has a regular generating partition. Let $T:(Y,\nu,d)\to (Y,\nu,d)$ be an ergodic automorphism and let $\psi\in L^1_+(\nu)$. Let $T^\psi$ denote the corresponding {\em special flow}, ie the flow on 
$$Y^\psi:=\{(x,s)\;:\; x\in Y, s<\psi(x)\}$$
given by 
$$
T^\psi(x,s):=(T^nx,s+t-S_n(\psi)(x)),
$$
where $n\in \Z$ is unique such that $S_n(\psi)(x)\leq t+s<S_{n+1}(\psi)(x)$. The flow preserves the measure $\nu^\psi$ which is the product measure restricted to $Y^\psi$. Moreover, let $\tilde{d}$ be the product metric on $Y^\psi$. Then  it is easy to see that (\textbf{B}) is satisfied for $T^\psi$ (with the metric $\tilde{d}$) and 
$$
Z_j:=\{(x,s)\;:\; \epsilon_j<s<\psi(x)-\epsilon_j\}.
$$
Assume now that there exists a sequence of {\em towers} $\mathcal{T}_n:=\bigcup_{t<h_n} T^\psi_tB_n$, where $B_n\subset Y$, $\sup_{t<h_n}diam(T^\psi_tB_n)\to 0$ and $T^\psi_tB_n\cap T^\psi_{t'}B_n=\emptyset$ for $0\leq t<t'<h_n$. Assume moreover that $\frac{\nu(B_n\cap T^\psi_{h_n}(B_n))}{\nu(B_n)}\to 1$.
\begin{lemma}Under the above assumptions, the flow $T^\psi$ is quasi-elliptic.
\end{lemma}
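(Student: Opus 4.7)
The plan is to use the tower structure to build the sequence $\{N_i\}$ together with the auxiliary sequences $\{a_i\}, \{b_i\}, \{\delta_i\}$. Condition (B) with $Z_j := \{(x,s) : \epsilon_j < s < \psi(x) - \epsilon_j\}$ is already announced in the excerpt, so only (A) and (C) need to be verified.

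Write $\eta_n := 1 - \nu(B_n \cap T^\psi_{h_n} B_n)/\nu(B_n)$, which tends to $0$ by hypothesis. I would choose an integer sequence $K_n \to \infty$ with $K_n \eta_n \to 0$ (for instance $K_n := \lfloor \eta_n^{-1/2}\rfloor$) and along this sequence set $a_i := h_{n_i}$, $b_i := K_{n_i} h_{n_i}$, $\delta_i := \sup_{0 \le t < h_{n_i}} \mathrm{diam}(T^\psi_t B_{n_i})$, so that $a_i/b_i \to 0$ and $\delta_i \to 0$ automatically. Next, I would introduce the good base
$$
B_{n}^{\mathrm{good}} := \{b \in B_{n} : T^\psi_{k h_{n}} b \in B_{n} \text{ for all } -1 \le k \le K_{n}\},
$$
and set $N_i := \bigcup_{0 \le t < h_{n_i}} T^\psi_t B_{n_i}^{\mathrm{good}}$. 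A standard subadditive argument using $T^\psi_{h_{n}}$-invariance on the base gives $\nu(B_{n}^{\mathrm{good}})/\nu(B_n) \ge 1 - (K_{n}+2)\eta_{n} \to 1$, and assuming (as is implicit in the tower hypothesis) $\nu^\psi(\mathcal{T}_{n}) \to 1$, this forces $\nu^\psi(N_i) \to 1$.

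For (A), given $y_j = T^\psi_{s_j} b_j \in N_i$ ($j=1,2$) with $0 \le s_j < h_{n_i}$, I would take $t_{y_1, y_2}$ to be the representative of $s_2 - s_1$ modulo $h_{n_i}$ in $[0, h_{n_i})$. When $s_1 \le s_2$ one has $K_{-t_{y_1,y_2}} y_2 = T^\psi_{s_1} b_2$, which sits in the same tower level $T^\psi_{s_1} B_{n_i}$ as $y_1$, so the distance is at most $\delta_i$; when $s_1 > s_2$ the good-set condition $T^\psi_{-h_{n_i}} b_2 \in B_{n_i}$ lets us rewrite $K_{-t_{y_1,y_2}} y_2 = T^\psi_{s_1}(T^\psi_{-h_{n_i}} b_2)$, which again lies in the level $T^\psi_{s_1} B_{n_i}$ of diameter at most $\delta_i$.

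For (C), the crux is that on $B_{n_i}^{\mathrm{good}}$ the two orbits traverse the tower in lockstep: for every $t \in [0, b_i]$ one writes $s_1 + t = j h_{n_i} + r$ with $r \in [0, h_{n_i})$ and $0 \le j \le K_{n_i}$, and then both $K_t y_1 = T^\psi_r(T^\psi_{j h_{n_i}} b_1)$ and $K_{t - t_{y_1, y_2}} y_2 = T^\psi_r(T^\psi_{j h_{n_i}} \tilde b_2)$ (where $\tilde b_2 = b_2$ or $T^\psi_{-h_{n_i}} b_2$ according to the case) lie in the level $T^\psi_r B_{n_i}$, hence at distance at most $\delta_i < \epsilon$; the extra hypothesis $K_t y_1 \in Z_{j_\epsilon}$ in (C) is merely a restriction on which $t$'s must be checked, so it plays no role in this bound. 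The main obstacle I anticipate is controlling the measure of $N_i$: this requires combining the implicit $\nu^\psi(\mathcal{T}_n) \to 1$ with the subadditive bound on $K_n$ consecutive returns to $B_n$, and extending the good-set condition to $k = -1$ so that the backward-in-time case $s_1 > s_2$ in (A) and (C) stays inside the tower.
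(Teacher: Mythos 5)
Your proof follows the same route as the paper: build $N_i$ from the set of base points that return to $B_n$ under $K_n$ consecutive applications of $T^\psi_{h_n}$, with $K_n\to\infty$ slowly enough that $K_n\eta_n\to 0$, and use the small level-diameter $\delta_i$ to verify $(\textbf{A})$ and $(\textbf{C})$. Your explicit inclusion of the backward return $k=-1$ in $B_n^{\mathrm{good}}$ is a welcome correction: the paper's $\tilde{B}_n := \bigcap_{i=0}^{M_n}T^\psi_{-ih_n}(B_n)$ controls only forward returns, yet its case $t_1>t_2$ (where $t_{y_1,y_2}=t_2-t_1+h_i$) implicitly needs $T^\psi_{-h_i}$ of the base point of $y_2$ to lie in $B_i$ -- exactly the condition you add.
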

\begin{proof} As mentioned above, $(\textbf{C})$ holds. Let $(M_n)$ be a sequence tending to $+\infty$
such that $\nu(\tilde{B}_n)\to 1$, where  $$\tilde{B}_n:=\cap_{i=0}^{M_n}T^{\psi}_{-ih_n}(B_n).
$$  Let $\delta_i:=1/2\max_{t\leq h_i} diam(T_tB_i)$, $a_i=2h_i$ and $b_i=1/2M_ih_i$ and define
$$
N_i:=\Big(\bigcup_{t<h_i} T^\psi_t\tilde{B}_i\Big)\cap \Big\{(x,s): \delta_i\leq s\leq \psi(x)-\delta_i\}
$$
Notice that $\nu(N_i)\to 1$ as $i\to +\infty$.
Take any $(y,s),(y',s')\in N_i$. Then $(y,s)\in T^\psi_{t_1}\tilde{B}_i$ and $(y',s')\in T^\psi_{t_2}\tilde{B}_i$.  If $0<t_1\leq t_2<h_i$ we set $t_{y_1,y_2}:=t_2-t_1\in [0,h_i]$, otherwise, i.e. if $t_1>t_2$, we set $t_{y_1,y_2}=t_2-t_1+h_i\in [0,2h_i]$. It then follows by the definition of $\tilde{B}_i$ that $(y,s)$ and $T^\psi_{t_{y_1,y_2}}(y',s')$ are in one level of $\Big(\bigcup_{t<h_i} T^\psi_t\tilde{B}_i\Big)$ and (by the definition of $\delta_i$), $d((y,s),K_{-t_{y_1,y_2}}(y',s'))<\delta_i$ (the metric is just the product metric, see the definition of $N_i$). This gives (\textbf{A}). For (\textbf{C}) notice that 
$$
\bigcup_{t<M_ih_i}T^\psi(\tilde{B}_n)\subset \bigcup_{t<h_i}T^\psi(B_n).
$$
By the above, $(y,s)$ and $(y',s')$ are in one level of $T^\psi_{\bar{t}}(\tilde{B}_n)$ for some $\bar{t}<2h_i$. Therefore, for every $t_0<b_i=1/2M_ih_i$, we have that  $T^\psi_{t_0}(y,s)$ and $T^\psi_{t_0}(y',s')$ are in one level of $T^\psi_tB_n$ for some $t<h_i$. So if $T^\psi_{t}(y,s)\in Z_{j_0}$  for $t<b_i$(where $j_0$ is the smallest such that $\nu^\psi(Z_{j_0})\geq 1-\epsilon^2$), then for sufficiently large $i$, 
$$
d\Big(T^\psi_{t}(y,s), T^\psi_{t-t_{y_1,y_2}}(y,s)\Big)<\epsilon.
$$
So we get that (\textbf{C}) holds. This finishes the proof.
\end{proof}
Let $\mathcal S$ be a {\em translation flow} on a surface of genus $g$. Then it has a special representation over an interval exchange transformation and piecewise constant roof function $\psi$. It follows by a result of Katok, \cite{Kat}, that almost every (in the measure theoretic sense) such special flow is {\em rigid} with a sequence of towers $\mathcal{T}_n$, $\nu^\psi(\mathcal{T}_n)\to 1$.
and the base of the tower $B_n$ being an interval. In this case rigidity implies that $\frac{\nu(B_n\cap T^\psi_{h_n}(B_n))}{\nu(B_n)}\to 1$. It then follows from the above lemma that a.e. translation flow is quasi-elliptic.   We recall that by \cite{Avila-Forni}, a.e. translation flow is weakly mixing. 
 Analogously, if $T$ is an irrational rotation by $\alpha$ and $\psi$ is a $C^1$ function, then it follows that the corresponding flow $T^\psi$ is also rigid and analogously to the case of translation flows, quasi elliptic. Such flows arise as representations of reparametrizations of linear flows on the two dimensional torus.  Moreover, if the rotation is sufficiently Liouvillean, then  \cite{Fayad}, the corresponding special flow is also weakly mixing (for some roof functions $\psi$).

We also have the following lemma:
\begin{lemma}Let $T:(Y,\nu,d)\to (Y,\nu,d)$ be an ergodic automorphism such that there exists a regular generating partition\footnote{Recall that this means that $\nu(V_\eta(\partial \mathcal Q))\to 0$ as $\eta \to 0$.} for $T$. Let $\psi$ be a piecewise Lipshitz function. Then the corresponding special flow $T^\psi$ has a regular generating partition (see Definition \ref{Gen}).
\end{lemma}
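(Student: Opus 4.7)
The plan is to build a regular generating partition on $Y^\psi$ by stacking a refinement of the base generator into horizontal slices of fixed height. Let $\mathcal R$ be the regular generating partition for $T$ and let $Y_1,\dots,Y_k$ be the pieces on which $\psi$ is Lipschitz; set $\mathcal R' := \mathcal R \vee \{Y_1,\dots,Y_k\}$. This refinement is still generating (any refinement of a generator is a generator) and still regular, since the standing interpretation of "piecewise Lipschitz" includes $\nu(V_\eta \partial Y_i)\to 0$. Fix a small $\delta>0$ and set
\[
\mathcal Q := \bigl\{\,Q_{R,j}\ :\ R\in\mathcal R',\ 0\le j\le \lceil\sup\psi/\delta\rceil\,\bigr\},\ \ \text{where}\ \ Q_{R,j}:=\{(x,s)\in Y^\psi : x\in R,\ j\delta \le s<(j+1)\delta\}.
\]
Since $\psi$ is bounded, $\mathcal Q$ is a finite partition. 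Then pick $t_0>0$ such that $T^\psi_{t_0}$ is ergodic---a full-Lebesgue-measure choice, as the ergodic flow $T^\psi$ has only countably many non-ergodic times---and such that additionally $t_0/\delta$ is irrational. This takes care of item (1) of Definition~\ref{Gen}.

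For regularity (item (3)) I decompose $\partial\mathcal Q$ into three kinds of sets. The vertical walls $(\partial \mathcal R')\times [0,\psi(\cdot))$ have $\eta$-neighborhood of $\nu^\psi$-measure $\le \sup(\psi)\cdot \nu(V_\eta\partial\mathcal R')$, tending to zero by regularity of $\mathcal R'$. The horizontal slices $R\times\{j\delta\}\cap Y^\psi$ number $O(1/\delta)$, each contributing $O(\eta)$ to its $\eta$-neighborhood. The roof graph $\{(x,\psi(x)):x\in Y_i\}$ is the graph of a Lipschitz function on each piece $Y_i$, and its $\eta$-neighborhood in the product metric therefore has measure $O(\eta)$ per piece. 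Summing yields $\nu^\psi(V_\eta \partial\mathcal Q)\to 0$.

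The main obstacle is item (2): showing that $\mathcal Q$ is a generator for $T^\psi_{t_0}$. I plan an Abramov-style recovery of $(x,s)$ from the two-sided $\mathcal Q$-itinerary $\{\mathcal Q(T^\psi_{nt_0}(x,s))\}_{n\in\Z}$, in two stages. Stage one recovers $x$: roof crossings are identifiable from the itinerary as abrupt drops of the floor-index component, and between consecutive crossings the recorded base atom is the $\mathcal R'$-atom of the current base point $T^k x$; reading these off in order yields the full $\mathcal R'$-itinerary of $x$ under $T$, which determines $x$ because $\mathcal R'$ is generating for $T$. Stage two recovers $s$: conditional on $x$, between crossings $s$ evolves by the rotation $s\mapsto s+t_0\pmod\delta$, which is minimal by irrationality of $t_0/\delta$, so the intersection over $n$ of $[j_n\delta,(j_n+1)\delta)-nt_0\pmod\delta$ collapses to a single point and pins down $s$ exactly. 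The delicate point is stage one when $t_0>\min\psi$, in which case a single $t_0$-step may traverse several floors; this is handled by restricting to a full-measure invariant set on which the Birkhoff ergodic theorem determines the number of crossings per step as a deterministic function of the data already recovered. Together the two stages give that $\bigvee_{n\in\Z} T^{\psi,-n}_{t_0}\mathcal Q$ separates points modulo null sets, hence equals the Borel $\sigma$-algebra of $Y^\psi$.
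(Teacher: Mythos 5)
Your high-level plan is the same as the paper's: slice the tower into horizontal bands of fixed height (with a remainder band near the graph), pick a flow time $t_0$ making $T^\psi_{t_0}$ ergodic, and reconstruct the point from its $\mathcal Q$-itinerary. Your regularity estimate for item (3) is correct and essentially what the paper does. The gap is in Stage two. Conditional on knowing $x$, the height above the base does \emph{not} evolve modulo $\delta$ by the pure rotation $s\mapsto s+t_0$: between consecutive roof crossings it does, but at each crossing the height is additionally shifted by $-\psi(T^kx)\ (\mathrm{mod}\ \delta)$, and this shift depends on the roof function. Consequently irrationality of $t_0/\delta$ alone does not make the sequence of heights modulo $\delta$ dense, and the intersection of the pulled-back $\delta$-intervals need not collapse to a point: between any two crossings you see only finitely many steps of the rotation, and two starting heights $s<s'$ with $s'-s$ small can have identical $\mathcal Q$-itineraries through all of them. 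Nothing in the argument uses the crossings to recover equidistribution, so the claim that $s$ is pinned down is unjustified as written. (A mechanism that does work: since $(x,s)$ and $(x,s')$ cross the roof at different times, there are time windows of length $s'-s$ after each crossing in which the two points sit over \emph{different} base points; by ergodicity the $t_0$-orbit hits such windows, and at those moments the base partition separates them.) Your Stage-one worry about $t_0>\min\psi$ is a self-inflicted complication: since the admissible $t_0$ form a full-measure set, simply take $t_0$ smaller than $\inf\psi$ — the paper takes $t_0<\epsilon^2$ precisely for this reason — after which each step crosses at most one roof and the itinerary-to-base reconstruction is elementary. The Birkhoff-theorem workaround you sketch for multiple crossings per step is vague and unnecessary.
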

\begin{proof}Let $\epsilon>0$ be small enough (depending on $\psi$ and $T$). By intersecting  $\mathcal Q=(Q_1,\ldots, Q_k)$ with balls of radius $\epsilon/2$ we can assume that $\max diam(Q_i)<\epsilon$. Let then $\mathcal Q^\psi$ be the partition of $Y^\psi$ where we partition each set 
$$
\{(x,s)\;:\; x\in Q_i\},
$$
into rectangles of height $\epsilon$, until we hit the graph, in which case this neighborhood of the graph is one additional atom. Let $t_0<\epsilon^2$ be such that $T^\psi_{t_0}$ is ergodic (recall that an ergodic flow can have at most countably many non-ergodic times). Take $(y,s)$ and $(y',s')$ and let $n\in \Z$ be such that $T^ny$ and $T^ny'$ are in different atoms of $\mathcal Q$. Assume WLOG that $n>0$ (if not, we argue in the negative direction).
Let $m=m(n)$ be the smallest such that $d(T^\psi_{mt_0}(y,s),(T^ny,0))<\epsilon$ (such $m$ exists by the definition of $t_0$). If for every $0\leq k< m$ the points $T^\psi_{kt_0}(y,s)$ and $T^\psi_{kt_0}(y',s')$ are in one atom of $\mathcal Q^\psi$, then it follows that if the first coordinate of $T^\psi_{kt_0}(y,s)$ is $T^{i_k}y$, then the first coordinate of $T^\psi_{kt_0}(y',s')$ is $T^{i_k}y'$. Indeed if $k_0<m$ was the smallest for which the first coordinates of $T^\psi_{k_0t_0}(y,s)$ and $T^\psi_{k_0t_0}(y',s')$ are different, then  $T^\psi_{(k_0-1)t_0}(y,s)$ and $T^\psi_{(k_0-1)t_0}(y',s')$ would have to be $\epsilon$ close to the graph of $\psi$ (since they are in one atom). But then  one of the points $T^\psi_{k_0t_0}(y,s)$, $T^\psi_{k_0t_0}(y',s')$ is still close to the graph, while the other is close to the base (since the first coordinates are different).
Then $T^\psi_{(m-1)t_0}(y,s)$ is a point which is $\epsilon$ close to the graph of $\psi$ and its first coordinate is $T^{n-1}y$ so the first coordinate of $T^\psi_{(m-1)t_0}(y',s')$ is $T^{n-1}y'$. Then The first coordinate of  $T^\psi_{mt_0}(y',s')$ is either $T^{n-1}y'$ and $T^\psi_{mt_0}(y',s')$ is still close to the graph of $\psi$ or it is $T^ny'$. In both cases $T^\psi_{mt_0}(y,s)$ and $T^\psi_{mt_0}(y',s')$ are in different atoms of $\mathcal{Q}^\psi$. Hence $\mathcal{Q}^\psi$ is a generating partition. Moreover $\mathcal{Q}^\psi$ is regular, since $\psi$ is piecewise Lipschitz. This finishes the proof.
\end{proof}

Notice that if $T$ is a {\em minimal} IET (including irrational rotations), then there always exist a regular generating parition for $T$. Indeed it is enough to take $\mathcal{P}$ to consist of sufficiently small intervals compatible with the continuity intervals for $T$. By minimality we then get that  for ANY two points $x,y\in [0,1)$ there exists a time $n_0$ such that $T^{n_0}x$ and $T^{n_0}y$ are in different atoms of $\mathcal{P}$.
As a consequence, we get:
\begin{corollary}A typical (in the measure theoretic sense) translation flow admits a regular generating partition and is quasi-elliptic. The same holds for special flows over rotations and under $C^1$ smooth roof functions.
\end{corollary}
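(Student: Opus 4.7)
The plan is to assemble the corollary from the two preceding lemmas and the facts about the base dynamics recalled in the commentary between them. I would first reduce to the special flow representation: a translation flow on a surface of genus $g$ is conjugate to a special flow $T^\psi$ over a minimal IET $T$ with piecewise constant roof $\psi$, and a $C^1$ reparametrization of a linear flow on $\T^2$ is conjugate to a special flow over an irrational rotation with a $C^1$ roof. In both cases the base is a minimal IET (rotations being a special case) and the roof is piecewise Lipschitz, so both hypotheses needed for the second lemma (regular generating partition for the base, piecewise Lipschitz roof) are available.

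For the quasi-elliptic property, I would invoke Katok's rigidity theorem to produce, for a.e.\ parameter, a sequence of towers $\mathcal{T}_n=\bigcup_{t<h_n}T^\psi_tB_n$ with $B_n$ an interval in the base, $\nu^\psi(\mathcal{T}_n)\to 1$, $\sup_{t<h_n}\mathrm{diam}(T^\psi_tB_n)\to 0$, and the rigidity estimate $\nu(B_n\cap T^\psi_{h_n}B_n)/\nu(B_n)\to 1$. These are exactly the hypotheses of the first lemma, which then yields that $T^\psi$ is quasi-elliptic. For the $C^1$ reparametrization case on $\T^2$ the same scheme works: such special flows over rotations are rigid with towers of interval bases shrinking in diameter, so the first lemma again gives quasi-ellipticity.

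For the regular generating partition, I would construct a regular generating partition $\mathcal Q$ of the base as follows: since $T$ is a minimal IET, any sufficiently fine finite partition into intervals that refines the continuity partition of $T$ separates orbits, i.e.\ for any two distinct $x,y$ there is $n$ with $T^nx,T^ny$ in different atoms, giving a generating partition; the boundary consists of finitely many points and so is regular (here $\nu(V_\eta(\partial \mathcal Q))\to 0$ trivially). Since $\psi$ is piecewise Lipschitz (constant for IETs, $C^1$ for the torus case), the second lemma lifts $\mathcal Q$ to a regular generating partition $\mathcal Q^\psi$ for $T^\psi$.

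Combining the two outputs gives the corollary. The only step requiring care is checking that Katok's rigidity really produces towers of the shape required by the first lemma (interval base shrinking uniformly across the tower, together with the rigidity ratio converging to one); this is the part I would expect to spell out most carefully, but it is essentially contained in the rigidity construction and is already stated verbatim in the discussion preceding the second lemma, so no substantial obstacle remains.
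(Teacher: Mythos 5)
Your proposal assembles the corollary in exactly the same way the paper does: invoke Katok's rigidity to produce shrinking towers satisfying the hypotheses of the first lemma (giving quasi-ellipticity), and use the minimality of the base IET (or rotation) together with the piecewise Lipschitz roof to apply the second lemma (giving a regular generating partition). The routing through the special flow representation and the two preceding lemmas matches the paper's argument step for step.
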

\begin{remark}
We remark that it should be possible to prove quasi-ellipticity and existence of regular partitions for a broad class of {\em rank one} systems. In this paper we focused however on examples coming from smooth (or piecewise smooth) dynamics.
\end{remark}

\end{document}